\newcommand{\F}{\mathbb{F}}
\newcommand{\N}{\mathbb{N}}
\newcommand{\Z}{\mathbb{Z}}
\newcommand{\PG}{\mathrm{PG}}
\newcommand{\Q}{\mathcal{Q}}
\newcommand{\Points}{\mathcal{P}}
\newcommand{\Lines}{\mathcal{L}}
\newcommand{\I}{\mathtt{I}}
\newcommand{\SCHex}{\mathsf{H}}
\newcommand{\FP}{\F\Points}
\newcommand{\Code}{\mathfrak{C}_\F}
\newcommand{\f}{\mathfrak{f}}
\newcommand{\g}{\mathfrak{g}}
\renewcommand{\i}{\mathfrak{i}}
\renewcommand{\c}{\mathfrak{c}}
\newcommand{\cx}{\mathfrak{c}_{x}}
\newcommand{\cv}{\mathfrak{c}_{v}}
\newtheorem{theorem}{Theorem}
\newtheorem{definition}{Definition}
\newtheorem{corollary}{Corollary}
\newtheorem{lemma}{Lemma}
\theoremstyle{remark}
\newtheorem{remark}{Remark}
\title{On certain blocking sets and the minimum weight of the code of generalised polygons}
\author{Sebastian Petit \thanks{School of Mathematics and Statistics, University of Canterbury, Private Bag 4800, 8140 Christchurch, New Zealand \href{mailto:sebastian.petit@pg.canterbury.ac.nz}{\url{sebastian.petit@pg.canterbury.ac.nz}}}
\and Geertrui Van de Voorde\thanks{School of Mathematics and Statistics, University of Canterbury, Private Bag 4800, 8140 Christchurch, New Zealand. ORCID: 0000-0002-4957-6911. \href{mailto:geertrui.vandevoorde@canterbury.ac.nz}{\url{geertrui.vandevoorde@canterbury.ac.nz}}}}
\date{}
\begin{document}

\maketitle
\begin{abstract}
In this paper, we study and characterise certain blocking sets in generalised polygons. This will allow us to derive new results about the minimum weight and minimum weight code words in the code generated by the rows of the incidence matrix of a  generalised polygon over a field \(\F\).
\end{abstract}


\section{Introduction}
\subsection{Motivation}
The study of the code generated by the incidence matrix of points and blocks of a design has a long and rich history, described in the monograph \cite{AssmusAndKey}. Arguably the best studied case is that of points and lines in a projective plane, partially motivated by the relevance of such codes in the proof of the non-existence of a projective plane of order 10 \cite{Lam}.

A classical result in this area shows that the $p$-ary code generated by the incidence vectors of lines of a projective plane of order $q=p^h$, $p$ prime, has minimum weight $q+1$, and the minimum weight vectors are precisely the scalar multiples of incidence vectors of lines (see Theorem \ref{AK}). An easy proof of this result is based on the correspondence between small weight code words in this code with blocking sets in the plane and the fact that the smallest blocking sets in the plane are given by lines (see Theorem \ref{BoseBurton}).

Similar results have been shown for the code generated by subspaces of $\PG(k,q)$ and for the code generated by lines of a regular generalised $2m$-gon. We will provide more details in Subsection \ref{subsection:codes}.

This paper contributes to this study by showing that the minimum weight of the code of points and lines in an arbitrary, not necessarily regular, thick generalised polygon is still the weight of a line. To prove this, we will establish a connection with certain blocking sets in weak generalised polygons, regardless of whether the polygon is regular or not. Our first main result (Theorem \ref{distanceTraces}) will find a lower bound on the size of such blocking sets, but it will also show that they behave in a fundamentally different way than in the case of projective spaces.
As a consequence of this difference in behaviour, some conditions will arise in our characterisation of minimum weight codewords in the code of not necessarily regular thick generalised polygons (see Theorem \ref{main}).

\subsection{Background}

\subsubsection{Weak generalised polygons}

We now formally introduce the notions used in this paper. As usual, a point-line geometry is a triple consisting of a point set $\Points$, a line set $\Lines$ and a symmetric incidence relation \(\I\subseteq(\Points\times\Lines) \cup (\Lines\times\Points)\). An {\em element} of a point-line geometry is a point or a line of that geometry.

\begin{definition}
	For \(n\ge 3\), a {\em weak generalised $n$-gon} is a point-line geometry satisfying:   
	\begin{enumerate}[(GP1)]
		\item There exists no \(k\)-gon (as a subgeometry) for \(2 \le k < n\).
		\item Every two elements of \(\Points \cup \Lines\) are contained in an \(n\)-gon.
	\end{enumerate}
\end{definition}

\begin{definition}
	A {\em thick generalised \(n\)-gon} is a weak generalised \(n\)-gon with the additional property:
	\begin{enumerate}[(GP3)]
		\item There exists an \((n+1)\)-gon (as a subgeometry).
	\end{enumerate}
\end{definition}

\begin{remark}\label{thickAlt}
    Property (GP3) is equivalent to the following, see for instance \cite{Hendrik}:
    \textit{
    \begin{enumerate}[(GP3')]
		\item Every point is incident with at least three lines and every line is incident with at least three points. 
	\end{enumerate}}
\end{remark}

\begin{remark} In the literature, the term \emph{generalised polygon} is sometimes used for what we call thick generalised polygons. To avoid ambiguity, we will always add \emph{weak} or \emph{thick}.\end{remark}

\begin{remark} Most of our results apply to weak generalised polygons. In our final result, Theorem \ref{main}, the other conditions we impose imply that the considered generalised polygon needs to be thick. 
\end{remark}

The {\em incidence graph} of a point-line geometry is the bipartite graph with as vertices the points and the lines and an edge between vertices \(x\) and \(y\) if and only if \(x \I y\).

A weak generalised \(n\)-gon can also be defined via its incidence graph:
\begin{corollary}(See e.g. \cite[Theorem 1.5.10]{Hendrik})\label{incidenceGraph}
	Let \(\Gamma\) be a point-line geometry with at least two points and the properties that:
	\begin{itemize}
		\item every point is incident with at least two lines;
		\item every line is incident with at least two points.
	\end{itemize}
	Then, \(\Gamma\) is a weak generalised \(n\)-gon if and only if its incidence graph has diameter \(n\) and girth \(2n\).
\end{corollary}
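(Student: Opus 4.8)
The plan is to translate the two defining properties of a weak generalised $n$-gon into the language of the incidence graph and to exploit that this graph is bipartite, so all its cycles have even length. The crucial dictionary entry is that an ordinary $k$-gon (for $k\ge 2$) in the geometry is exactly a cycle of length $2k$ in the incidence graph: reading a $2k$-cycle off the bipartite graph alternates $k$ points and $k$ lines, and conversely. Hence (GP1), the absence of ordinary $k$-gons for $2\le k<n$, is precisely the statement that there is no cycle of length $2k$ with $2\le k<n$; since the shortest possible cycle in the bipartite incidence graph has length $4$, this is equivalent to the girth being at least $2n$.

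For the forward implication I would first note that (GP1) gives girth $\ge 2n$ as above. Property (GP2) produces, for any two elements, an ordinary $n$-gon containing them, i.e. a $2n$-cycle; this pins the girth down to exactly $2n$ and, since two vertices on a common $2n$-cycle are at distance at most $n$, it also gives diameter $\le n$. To see the diameter is exactly $n$, I would take two vertices $x,z$ at cycle-distance $n$ on such a $2n$-cycle: a path of length $<n$ between them would combine with one of the two cycle-halves into a closed walk of length $<2n$, hence a cycle shorter than the girth, a contradiction, so $d(x,z)=n$.

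The substance of the backward implication is to recover (GP2) from girth $2n$ and diameter $n$; property (GP1) is immediate since girth $2n$ forbids every $2k$-cycle with $k<n$. The single technical tool I would establish first is uniqueness of geodesics: any two vertices at distance $\ell<n$ are joined by a unique shortest path, since two distinct ones would create a cycle of length at most $2\ell<2n$. Using this together with the minimum-degree-two hypothesis, I would show that a geodesic from a fixed vertex $x$ can always be extended one step further away as long as its far endpoint is still at distance $<n$ from $x$; otherwise that endpoint would have two neighbours at the same smaller distance from $x$, forcing two distinct geodesics to it.

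Now, given two elements $x,y$ at distance $d\le n$, I would take the unique geodesic $Q$ from $x$ to $y$ and extend it past $y$ as above to reach a vertex $z$ with $d(x,z)=n$, via a geodesic $R$ of length $n$ still containing $y$. Let $a$ be the neighbour of $x$ on $R$ and let $b\ne a$ be another neighbour of $x$, which exists by minimum degree two. Since the diameter is $n$, both $a$ and $b$ lie at distance $n-1$ from $z$, so each is joined to $z$ by a unique geodesic, and the one from $a$ is exactly the tail of $R$ and therefore passes through $y$. The key check is that these two geodesics meet only in $z$: if they first met at an earlier vertex, their two initial segments together with $x$ would close into a cycle of length strictly less than $2n$, contradicting the girth. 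Hence, joined through $x$, they form a simple $2n$-cycle, that is an ordinary $n$-gon, and it contains both $x$ and $y$. I expect this last step — guaranteeing that the constructed $2n$-cycle is simple and simultaneously passes through both prescribed elements — to be the main obstacle, and it is exactly where the girth bound and the uniqueness of short geodesics do the work.
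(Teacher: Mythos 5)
Your proof is correct, but note that the paper contains no proof of this statement at all: it is quoted as a known result with the pointer \cite[Theorem 1.5.10]{Hendrik}, so there is no internal argument to compare against. What you have reconstructed is essentially the standard textbook proof, and all the key ingredients are in place: the dictionary between ordinary $k$-gons and $2k$-cycles in the bipartite incidence graph (so (GP1) is exactly girth $\ge 2n$), the uniqueness of geodesics between vertices at distance $\ell<n$ (two distinct ones yield a cycle of length $\le 2\ell<2n$), the degree-two extension lemma (if every neighbour of the far endpoint were at the smaller distance, two of them would give two distinct short geodesics, using bipartite parity to exclude neighbours at equal distance), and the final splicing of the two geodesics from $a$ and $b$ into $z$ through $x$. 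Three small points you compressed, all harmless: first, for $d(x,y)=n$ the geodesic $Q$ need not be unique, since uniqueness is only available below distance $n$, but you never use its uniqueness, only its existence. Second, your claim that $d(b,z)=n-1$ needs bipartite parity in tandem with the diameter bound ($d(b,z)\in\{n-1,n+1\}$ by parity, and $\le n$ by diameter); you invoked only the diameter. Third, the ``first common vertex'' $w$ of the two geodesics into $z$ is well defined precisely because your uniqueness lemma forces their intersection to be a common terminal segment: any common $w\neq z$ has $d(w,z)\le n-1<n$, so the subgeodesics from $w$ to $z$ coincide, and then the cycle through $x$, $a$, $w$, $b$ has length $2n-2d(w,z)<2n$, contradicting the girth, exactly as you say; one also checks $x$ itself lies on neither geodesic since every vertex on them is at distance $\le n-1$ from $z$ while $d(x,z)=n$. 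With these details pinned down your argument is a complete and self-contained proof of the cited equivalence.
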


The {\em distance} between elements of a weak generalised $n$-gon $\Gamma$ is the distance inherited from its incidence graph. Two points, resp. lines, are {\em opposite} if they are at the furthest possible distance; the distance between opposite points, resp. lines, is given by $n$.

The following useful observation follows directly from the definition of a weak generalised polygon. 
\begin{corollary}(See e.g. \cite[Theorem 1.3.5]{Hendrik})
\label{uniqueClosest}
	Let \(p\) be a point and \(L\) be a line of a weak generalised \(2m\)-gon \(\Gamma\), $m\geq 2$. Then there is a unique point on \(L\) closest (that is, at smallest distance) to \(p\). 
\end{corollary}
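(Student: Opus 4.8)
The plan is to carry out the whole argument in the incidence graph of \(\Gamma\), which by Corollary \ref{incidenceGraph} has diameter \(2m\) and girth \(4m\). Since this graph is bipartite with the points and the lines as its two classes, the distance \(\mathrm{d}(p,x)\) between two points is even, while the distance between a point and a line is odd. Writing \(d:=\mathrm{d}(p,L)\), the diameter bound \(d\le 2m\) together with the parity of \(d\) forces \(d\le 2m-1\); this inequality, namely \(d<2m=\tfrac{1}{2}\cdot 4m\), is exactly what will let me invoke the girth.

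First I would determine the distances from \(p\) to the points of \(L\). If \(x\) is a point incident with \(L\), then \(\mathrm{d}(x,L)=1\), so the triangle inequality gives \(d-1\le \mathrm{d}(p,x)\le d+1\); as \(\mathrm{d}(p,x)\) is even and \(d\) is odd, necessarily \(\mathrm{d}(p,x)\in\{d-1,d+1\}\). Hence the smallest possible distance from \(p\) to a point of \(L\) is \(d-1\), and it is attained: the penultimate vertex of any geodesic from \(p\) to \(L\) is adjacent to the line \(L\), hence is a point on \(L\), and it lies at distance \(d-1\) from \(p\). This establishes existence of a closest point.

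For uniqueness, the key ingredient I would isolate is a standard consequence of the girth: in any graph of girth \(g\), two vertices at distance less than \(g/2\) are joined by a \emph{unique} geodesic. Indeed, if \(P\neq Q\) were two geodesics with the same endpoints, then the symmetric difference of their edge sets would be a nonempty subgraph in which every vertex has even degree, and would therefore contain a cycle of length at most \(|P|+|Q|<g\), contradicting the girth. Extracting this cycle cleanly is the one step that needs a little care, but it is routine.

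It then remains to convert a closest point into such a geodesic. Suppose \(x\) and \(y\) are points on \(L\) with \(\mathrm{d}(p,x)=\mathrm{d}(p,y)=d-1\). Appending the edge \(\{x,L\}\) to a geodesic from \(p\) to \(x\) produces a walk from \(p\) to \(L\) of length \(d=\mathrm{d}(p,L)\), which is therefore a geodesic from \(p\) to \(L\); the same holds for \(y\). Since \(d<2m=\tfrac{1}{2}\cdot 4m\) equals half the girth, the uniqueness statement above applies and these two geodesics coincide. In particular their penultimate vertices agree, giving \(x=y\). The main obstacle is thus conceptual rather than computational: one must lift the question about points of \(L\) to a question about geodesics ending at \(L\), at which point the large girth does all the work through the uniqueness of geodesics.
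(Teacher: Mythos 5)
Your proof is correct. The paper itself offers no argument for this corollary (it is quoted from \cite[Theorem 1.3.5]{Hendrik}), but the proof given there is, in essence, exactly yours: one works in the incidence graph, whose diameter \(2m\) and girth \(4m\) the paper records in Corollary \ref{incidenceGraph}, and lets the girth rule out two distinct closest points. All the delicate steps check out: parity forces \(d(p,L)\le 2m-1<\tfrac{1}{2}\cdot 4m\); the parity-and-triangle-inequality analysis correctly pins the distances from \(p\) to points of \(L\) to \(\{d-1,d+1\}\) with \(d-1\) attained; the symmetric-difference extraction of a cycle of length at most \(|P|+|Q|<4m\) is sound (endpoints get even degree \(0\) or \(2\), interior vertices likewise); and a walk of length \(d(p,L)\) is automatically a path, so appending the edge \(\{x,L\}\) does yield a genuine geodesic. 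The only remark worth making is that your detour through the general unique-geodesic lemma can be short-circuited: if \(x\neq y\) were both on \(L\) at distance \(d-1\) from \(p\), concatenating a geodesic \(p\to x\), the path \(x\,\I\,L\,\I\,y\), and a geodesic \(y\to p\) gives a closed walk of length \(2d\le 4m-2\) whose underlying subgraph (the union of two distinct \(p\)--\(L\) paths) already contains a cycle shorter than the girth. That said, isolating the half-girth uniqueness of geodesics is arguably cleaner and reusable, so this is a stylistic difference, not a mathematical one.
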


We will only deal with weak finite generalised polygons that have an {\em order} $(s,t)$, that is, such that every line is incident with \(s+1\) points and every point is incident with \(t+1\) lines for some integers $s,t\geq 1$. It is well-known (see for example \cite[Corollary 1.5.3]{Hendrik}) that every thick finite generalised polygon has an order. While having an order is not a strong restriction, the famous theorem of Feit and Higman shows that there are rather strong restrictions known on the possible values of $n,s,t$:

\begin{theorem}(Feit \& Higman \cite{FEITHIGMAN})
	Let \(\Gamma\) be a finite weak generalised \(n\)-gon of order \((s,t)\) with \(n\ge 3\). Then one of the following holds:
	\begin{enumerate}[(i)]
		\item \(s=t=1\), and \(\Gamma\) is an ordinary \(n\)-gon;
		\item \(n=3,s=t>1\), and \(\Gamma\) is a projective plane;
		\item \(n = 4\) and \(\frac{st(1+st)}{s+t}\) is an integer;
		\item \(n=6\) and if \(s,t>1\), then \(st\) is a perfect square;
		\item \(n=8\) and if \(s,t>1\) then \(2st\) is a perfect square;
		\item \(n=12\) and \(s=1\) or \(t=1\). 
	\end{enumerate}
\end{theorem}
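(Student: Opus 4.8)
The plan is to analyse the spectrum of the incidence graph $\Gamma$ of the polygon and to extract the claimed arithmetic constraints from the integrality of the eigenvalue multiplicities; this is the classical route of Feit and Higman. By Corollary \ref{incidenceGraph}, $\Gamma$ is a connected bipartite graph of diameter $n$ and girth $2n$, and the order $(s,t)$ means that the two colour classes have valencies $s+1$ and $t+1$. First I would dispose of the ordinary polygon case $s=t=1$ directly: then $\Gamma$ is a $2n$-cycle, giving case (i). So from now on assume $s>1$ or $t>1$.

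The key structural step is to show that the distance matrices $A_0,A_1,\dots,A_n$ of $\Gamma$ (with $(A_i)_{xy}=1$ exactly when $x,y$ are at distance $i$) span a commutative $(n+1)$-dimensional algebra. Because the girth is $2n$ and the diameter is $n$, the local neighbourhoods are tree-like up to distance $n-1$, which forces a three-term recurrence $A_1A_i=c_{i+1}A_{i+1}+a_iA_i+b_{i-1}A_{i-1}$ whose intersection numbers are determined by $s,t$ (the coefficients alternate with the parity of $i$, reflecting that one must treat the two colour classes separately when $s\neq t$, i.e. work with the distance-biregular structure, or equivalently with $NN^{T}$ and $N^{T}N$ for the incidence matrix $N$). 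Solving the recurrence shows each $A_i$ is a degree-$i$ polynomial in $A_1$, these polynomials are essentially Chebyshev polynomials, and diagonalising yields that the eigenvalues of $A_1$ are $\pm\sqrt{(s+1)(t+1)}$ (the two trivial ones from biregularity) together with values $\theta$ with $\theta^{2}=s+t+2\sqrt{st}\,\cos(2\pi j/n)$ for suitable $j$. From the orthogonality weights of the Chebyshev polynomials I would then read off the multiplicity $m_j$ of each eigenvalue.

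The heart of the argument is arithmetic. Each $m_j$ is a non-negative integer, and $\sum_j m_j$ together with the power sums $\sum_j m_j\theta_j^{k}=\operatorname{tr}(A_1^{k})$ are integers for all $k$. Writing $\sigma=\sqrt{st}$, the explicit multiplicity formulas are rational expressions in $s,t$ and $\sigma$, and typically one eigenvalue is rational while the multiplicity attached to it still carries a factor of $\sigma$ (or of $\sqrt2\,\sigma$) inherited from the cyclotomic numbers $\cos(2\pi j/n)$. Demanding that this multiplicity be an integer is exactly what produces the square conditions: for $n=6$, where $2\cos(2\pi j/6)\in\{\pm1,\pm2\}$, integrality forces $\sigma=\sqrt{st}$ to be rational, i.e. $st$ a perfect square (case (iv)); for $n=8$, where $2\cos(2\pi/8)=\sqrt2$, it forces $\sqrt2\,\sigma=\sqrt{2st}$ to be rational, i.e. $2st$ a perfect square (case (v)); and for $n=4$, where $\cos(2\pi/4)=0$ and the point graph is strongly regular with the rational eigenvalues $s-1$ and $-t-1$, integrality of their multiplicities gives $\tfrac{st(1+st)}{s+t}\in\Z$ (case (iii)).

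The \textbf{main obstacle}, and the historical core of the Feit–Higman theorem, is to show that no other $n$ can occur and to handle the boundary cases. A degree count using that $2\cos(2\pi/n)$ has degree $\varphi(n)/2$ over $\mathbb{Q}$ already restricts attention to $\varphi(n)\le 4$, but this still admits $n=5$ and $n=10$ and, for $n=12$, would only suggest $3st$ a perfect square rather than the non-existence of thick examples. Excluding $n=5,10$, forcing $s=t$ when $n$ is odd, and proving that $n=12$ forces $s=1$ or $t=1$ (case (vi)) all require exploiting the integrality and non-negativity of the individual multiplicities much more delicately, tracking several eigenvalues together with their Galois conjugates simultaneously and ruling out the resulting Diophantine possibilities. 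I expect essentially all the difficulty to concentrate here; the remaining admissible configuration $n=3$, $s=t>1$ then drops out as the projective plane case (ii).
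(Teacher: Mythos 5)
The paper does not prove this statement at all: it is quoted as background, with a citation to Feit and Higman \cite{FEITHIGMAN}, so there is no internal proof to compare against. Measured against the classical argument, your outline is the right route: Feit and Higman originally phrased it via characters of the adjacency (Hecke) algebra of the geometry, and your distance-matrix/distance-biregular formulation with $NN^{T}$ and $N^{T}N$ is the standard modern equivalent. Your spectral data are essentially correct: the nontrivial eigenvalues $\theta$ of the incidence graph satisfy $\theta^{2}=s+t+2\sqrt{st}\cos(2\pi j/n)$, the trivial ones are $\pm\sqrt{(s+1)(t+1)}$, and integrality of multiplicities does yield (iii), (iv), (v) exactly as you describe.

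As a proof, however, the proposal stops short precisely where you say ``essentially all the difficulty'' lies: the multiplicity formulas are never derived, and the integrality analysis that \emph{is} the theorem is asserted for $n=4,6,8$ and deferred for everything else. Three concrete points. First, the reduction of odd $n$ to $s=t$ does not belong to the hard spectral residue at all: for odd $n$, opposite elements are a point and a line, and the resulting bijections between their sets of incident elements force $s=t$ by an elementary counting argument; this should be dispatched before any eigenvalue computation, after which only $n=5$ (with $s=t$) needs spectral exclusion. Second, you undersell what your own computation gives for $n=12$: the nontrivial values $2\cos(\pi j/6)$ include both $1$ and $\sqrt{3}$, so integrality forces \emph{both} $st$ and $3st$ to be perfect squares, which is impossible for $s,t>1$ since their ratio is $3$; hence case (vi) follows by the same mechanism as (iv) and (v), not by a separate delicate analysis. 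The genuinely delicate leftover is only $n=5,10$, where one must track Galois-conjugate eigenvalues and show their common multiplicity cannot be integral in the thick case — and this you leave entirely untouched. Third, a small slip: for $n=6$ the nontrivial cosine values are $2\cos(2\pi j/6)\in\{\pm1\}$ only ($j=1,2$); the values $\pm2$ you list would correspond to $j=0,3$, which do not occur among the nontrivial eigenvalues. In summary: correct skeleton, faithful to the classical proof, but the Diophantine core — deriving the multiplicities and executing the case-by-case exclusion — is missing rather than merely compressed.
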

For thick generalised polygons, more is known:
\begin{theorem}
	Let \(\Gamma\) be a finite thick generalised \(n\)-gon of order \((s,t)\) with \(n\ge 4\). Then one of the following holds:
	\begin{enumerate}[(i)]
		\item (Higman \cite{Higman}) \(n = 4 \) and \(s\le t^2\), dually \(t\le s^2\);
		\item (Haemers \& Roos \cite{HaemersAndRoos}) \(n = 6\) and \(s\le t^3\), dually \(t\le s^3\);
		\item (Higman \cite{Higman}) \(n = 8\) and \(s\le t^2\), dually \(t\le s^2\).
		
	\end{enumerate}
\end{theorem}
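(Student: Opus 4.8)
The plan is to attach to $\Gamma$ an association scheme on its point set and to read the inequalities off the positivity constraints that every such scheme must obey. By the theorem of Feit and Higman quoted above, a thick generalised $n$-gon with $n\ge 4$ necessarily has $n\in\{4,6,8\}$, so only these three values need to be treated; moreover, for each of them it suffices to establish one of the two stated bounds, since the other is precisely the same statement for the dual polygon, in which the roles of points and lines---and hence of $s$ and $t$---are interchanged.

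For $n=2d$ I would work with the \emph{point graph} $\Gamma_P$, whose vertices are the points of $\Gamma$ and whose edges join pairs of collinear points. Identifying the graph distance in $\Gamma_P$ with half the distance in the incidence graph, the characterisation in Corollary \ref{incidenceGraph} (girth $2n$, diameter $n$) together with the uniqueness statement of Corollary \ref{uniqueClosest} shows that $\Gamma_P$ is distance-regular of diameter $d$, with intersection numbers that are explicit polynomials in $s$ and $t$. Hence $\Gamma_P$ carries a $d$-class association scheme which is both $P$- and $Q$-polynomial.

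The heart of the matter is spectral. From the intersection array one computes the eigenvalues of $\Gamma_P$, their multiplicities (which are forced to be non-negative integers), and the Krein parameters of the scheme (which are forced to be non-negative). For $d=2$ the graph $\Gamma_P$ is strongly regular with parameters $\bigl((s+1)(st+1),\,s(t+1),\,s-1,\,t+1\bigr)$ and non-principal eigenvalues $r=s-1$ and $\ell=-(t+1)$. Here integrality of the multiplicities is automatic and yields nothing, so I would invoke the Krein condition $(\ell+1)\bigl(k+\ell+2r\ell\bigr)\le(k+\ell)(r+1)^2$. Substituting these values makes each side a multiple of the positive quantity $(s-1)(t+1)$, and after cancelling it the condition collapses to exactly $t\le s^2$; duality then supplies $s\le t^2$. (Equivalently, one may run the classical variance count over triples of pairwise non-collinear points.)

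For $n=6$ and $n=8$ the same route applies with $d=3$ and $d=4$. One solves the three- or four-term recurrence encoded in the intersection array to obtain the eigenvalues and dual eigenvalues of $\Gamma_P$, and then imposes non-negativity of the appropriate Krein parameter $q^{k}_{ij}$; expanding this in terms of $s$ and $t$ yields $s\le t^3$ in the hexagon case (this is the Haemers--Roos computation) and $s\le t^2$ in the octagon case, with the dual bounds again following by duality. I expect the genuine difficulty to lie here rather than in the setup: for $d\ge 3$ the non-trivial eigenvalues are irrational and the Krein parameters are unwieldy, so the real work is to evaluate them cleanly and to identify which single Krein inequality is the binding one. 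The reward of this approach is that every stated bound flows from one uniform principle---the positive semidefiniteness built into the association scheme of the point graph---rather than from three unrelated arguments.
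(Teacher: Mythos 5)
The paper does not prove this statement: it is quoted as background, with attributions to Higman \cite{Higman} and Haemers--Roos \cite{HaemersAndRoos}, so there is no in-paper proof to compare against; your proposal has to be measured against the cited literature, and there your route is not an alternative but essentially the original one (Higman worked with the eigenvalue constraints of the coherent configuration attached to the polygon, and Haemers--Roos obtained the hexagon bound from exactly this kind of spectral positivity condition on the distance-regular point graph). Your setup is sound: Feit--Higman plus thickness reduces to $n\in\{4,6,8\}$, the duality reduction is legitimate, and your $n=4$ case is complete and correct --- the point graph is strongly regular with the stated parameters, the restricted eigenvalues are $r=s-1$ and $\ell=-(t+1)$, and in the Krein condition $(\ell+1)(k+\ell+2r\ell)\le(k+\ell)(r+1)^2$ one has $k+\ell+2r\ell=-(s-1)(t+1)$, so both sides carry the factor $(s-1)(t+1)>0$ and the condition collapses to $t\le s^2$, with $s\le t^2$ by duality, exactly as you say.

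The gap is that for $n=6$ and $n=8$ --- the actual content of items (ii) and (iii) --- you do not do the computation: you neither determine the eigenvalues and dual eigenvalue data of the diameter-$3$ and diameter-$4$ point graphs nor identify which Krein parameter is binding, and you explicitly defer ``the real work''. As submitted this is a proof only of the quadrangle inequality together with a (correct, literature-backed) program for the other two cases; to close it you must exhibit the specific $q^k_{ij}$, here the one attached to the eigenvalue $-(t+1)$, and verify that its nonnegativity reduces to the desired bound. Three smaller corrections. First, watch the direction: by the pattern of your own $d=2$ computation, the point-graph Krein condition should be expected to deliver $t\le s^3$ (hexagon) and $t\le s^2$ (octagon), with the forms $s\le t^3$ and $s\le t^2$ then following by duality; your sentence asserts the point graph yields $s\le t^3$ directly, which is at best the dual bookkeeping reversed. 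Second, the claim that the scheme is $Q$-polynomial is both unsubstantiated and unnecessary --- Krein parameters are nonnegative for every association scheme, so you should drop it rather than defend it; similarly, integrality of the multiplicities is not ``automatic'' in the $d=2$ case, it yields the divisibility condition $(s+t)\mid st(s+1)(t+1)$, i.e.\ part (iii) of the quoted Feit--Higman theorem, just no inequality. Third, your expectation that the nontrivial eigenvalues are irrational for $d\ge3$ is wrong in the thick case: by Feit--Higman, $st$ (resp.\ $2st$) is a perfect square for a thick hexagon (resp.\ octagon), so $s-1\pm\sqrt{st}$ and $s-1\pm\sqrt{2st}$ are integers; the computation is messy but entirely rational.
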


Many non-isomorphic thick generalised quadrangles are known but up to duality, only two classes of finite thick generalised hexagons are known: the \emph{split Cayley hexagons} of order \((q,q)\) and the \emph{twisted triality hexagons} of order \((q^3,q)\). Only one class of finite thick generalised octagons is known (again up to duality): the Ree-Tits octagons of order \((q,q^2)\). 
For more information, we refer to \cite{Hendrik}.



\subsubsection{Codes}\label{subsection:codes}
In this paper, we will study the code  \(\Code(\Gamma)\), where \(\Gamma\) is a finite weak generalised \(2m\)-gon. This code will be defined using the following conventions.
\begin{definition}\label{def}
	Let \(\mathbb{F}\) be a field and let $\mathcal{P}$ be a set. 
    \begin{itemize}
    \item \(\mathbb{F}\Points\) is the vector space of \(\mathbb{F}\)-valued functions on the set \(\Points\); 
    \item For a subspace 
	 \(\mathfrak{M}\) of \(\mathbb{F}\Points\), its {\em dual}, \(\mathfrak{M}^D\), is the subspace of all \(\mathfrak{f}\in \FP:\) 
	\[\forall \mathfrak{g}\in\mathfrak{M}: \sum_{x\in\Points}\mathfrak{f}(x)\cdot\mathfrak{g}(x) = 0;\]
    \item	For a subset \(S\) of \(\Points\),  its {\em indicator function} \(\mathfrak{i}_{S} \in \mathbb{F}\Points\) is given by  \[\mathfrak{i}_{S}:\Points\to \mathbb{F}:p\mapsto \mathfrak{i}_{S}(p) = \begin{cases}
		1 &\text{ if } p\in S \\
		0 &\text{ if } p\not\in S
	\end{cases};\]
    \item For \(\mathfrak{f}\in\FP\),  the {\em support} of \(\mathfrak{f}\), denoted as \(\text{supp}(\mathfrak{f})\), is the set \(\{x\in\Points\mid \mathfrak{f}(x)\neq 0\}\); 
    \item The weight of a vector $\mathfrak{f}\in \FP$ is the size of $\text{supp}(\mathfrak{f})$.
    \end{itemize}
\end{definition}

To simplify notation, given a line \(L\), we write \(\mathfrak{i}_L\) for \( \mathfrak{i}_{\Gamma_{1}(L)}\) where \(\Gamma_1(L)\) is the set of points incident with the line \(L\) (see also Lemma \ref{GammaSets}). In other words, we identify a line with its point set. 

\begin{definition}
	The \(\mathbb{F}\)-ary code of the generalised polygon, denoted by \(\mathfrak{C}_{\mathbb{F}}(\Gamma)\), is the vector subspace of \(\FP\) generated by \(\{\i_L\mid L\in\Lines \}\). The {\em dual} code, denoted by \(\Code^D(\Gamma)\) is the dual of this subspace (as defined in Definition \ref{def}).
\end{definition}



The following results about \(\Code(\Gamma)\) and \(\Code^D(\Gamma)\) are known. 
\begin{theorem}(Bagchi \& Sastry \cite{BandS})\label{BS}
	Let \(\Gamma\) be a finite thick generalised \(2m\)-gon of order \((s,t)\) with \(m\ge2\). Let \(\F\) be any field. Then:
	\begin{enumerate}[(i)]
		\item The minimum weight of \(\Code^D(\Gamma)\) is at least \(2(t^m-1)(t-1)^{-1}\). 
		If \(\Gamma\) is \emph{regular}, then equality holds.
		\item The minimum weight of \(\Code(\Gamma)\) is at most \(s+1\). If \(\Gamma\) is \emph{regular}, then equality holds and any word of weight \(s+1\) is of the form \(\lambda\cdot\i_L\) for some \(0\neq\lambda\in \F\) and some line \(L\).
	\end{enumerate}
\end{theorem}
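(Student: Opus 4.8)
The plan is to treat the two parts separately, and within each the ``easy'' inequality first. For (ii), the upper bound is immediate: every line $L$ is incident with exactly $s+1$ points, so $\i_L\in\Code(\Gamma)$ is a codeword of weight $s+1$, and the minimum weight is at most $s+1$. For (i), I would first reduce the lower bound to a purely combinatorial statement about the support. If $\f\in\Code^D(\Gamma)$ is nonzero with support $S$, then by definition $\sum_{p\,\I\,L}\f(p)=0$ for every line $L$; hence no line can meet $S$ in exactly one point, since a single nonzero value cannot sum to $0$ over any field. Thus $S$ meets every line in either $0$ or at least $2$ points, and it suffices to bound the size of such a ``tangent-free'' set below by $2(t^m-1)(t-1)^{-1}$.

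For this bound I would root the counting at a point $x\in S$ and expand along geodesics, using that the incidence graph has girth $4m$ (Corollary \ref{incidenceGraph}), so that the ball of radius $2m-2$ around $x$ carries a tree structure and each non-opposite element has a unique neighbour towards $x$ (Corollary \ref{uniqueClosest}). The tangent-free condition then propagates outward: each line through $x$ must carry a second point of $S$ at distance $2$, and inductively each support point at distance $2j<2m$ forces, on each of its $t$ lines leading away from $x$, a further support point at distance $2j+2$. Counting one forced point per line yields a $t$-ary branching and hence a geometric-series lower bound of the shape $1+t+\dots+t^{m-1}=(t^m-1)(t-1)^{-1}$ for the support points ``on one side''. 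The factor $2$ I would obtain by observing that the outermost forced lines, at distance $2m-1$, are themselves tangent-free, so $S$ must contain a point $y$ opposite $x$; re-running the count rooted at $y$ produces a second such family, and one argues that these two contributions are essentially disjoint.

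In the regular case I would prove the two equalities by exhibiting optimal objects and arguing minimality. For (i), regularity provides the span/trace closures needed to build an explicit tangent-free configuration of size exactly $2(t^m-1)(t-1)^{-1}$ that supports a codeword of $\Code^D(\Gamma)$, the line-sum conditions being checked from the regular incidence pattern. For (ii), the substantive point is the characterisation. The natural route is to relate the support $B$ of a nonzero codeword of $\Code(\Gamma)$ to a blocking set for the lines at maximal distance (the ``distance traces'' of this paper), and to use that in the regular case the smallest such blocking set is the point set of a line, of size $s+1$; this gives minimum weight $\ge s+1$, hence equality. In the extremal case $B$ is then a line $L$, and a short incidence argument shows $\g$ is constant on $L$, so that $\g=\lambda\,\i_L$.

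The main obstacle I expect is matching the \emph{exact} constant $2(t^m-1)(t-1)^{-1}$ in the lower bound of (i). The expansion from $x$ is a genuine tree only inside the ball of radius $2m-2$; at the opposite distance $2m$ the uniqueness in Corollary \ref{uniqueClosest} fails and a single opposite point can be reached along many geodesics, so the naive branching can both over- and under-count depending on the regime. Controlling this collapse---and making the two rooted families genuinely disjoint so that the factor $2$ is correct and not lost---is precisely where the delicate work lies, and it reflects the fact, emphasised in the introduction, that these blocking sets behave fundamentally differently from their projective-space analogues.
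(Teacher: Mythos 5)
First, note that the paper does not prove this theorem: it is quoted from Bagchi and Sastry \cite{BandS}, and the surrounding text only sketches their argument (thin subpolygons of order $(1,t)$ give dual codewords, and orthogonality to these forces a weight-$(s+1)$ codeword to be a line). Your easy steps are correct: $\i_L$ gives the upper bound in (ii), and the support $S$ of a nonzero word of $\Code^D$ meets every line in $0$ or at least $2$ points. Your rooted tree count is also sound out to distance $2m-2$, where girth $4m$ keeps the forced points distinct; it yields $1+(t+1)(1+t+\dots+t^{m-2})=\frac{t^m+t^{m-1}-2}{t-1}$ points, short of the target $2\frac{t^m-1}{t-1}$ by exactly $t^{m-1}$. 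But your plan for recovering this deficit --- pick $y\in S$ opposite $x$, re-run the count at $y$, and argue the two families are ``essentially disjoint'' --- genuinely fails. A point at distance $2j$ from $x$ can lie at distance $2m-2j\le 2m-2$ from $y$ for every $j\ge 1$, so the two balls overlap heavily; worse, in the extremal configuration (the point set of an ideal $(1,t)$-subpolygon, which is exactly what regularity supplies), the union of the two balls does not even cover $S$: it misses every point opposite \emph{both} $x$ and $y$, and such points exist (for $m=2$ the $(1,t)$-quadrangle is a dual grid, where the union of the two trees has $t+3<2(t+1)$ points). The correct repair stays rooted at $x$: each of the at least $(t+1)t^{m-2}$ support points at distance exactly $2m-2$ from $x$ has $t$ outward lines at distance $2m-1$ from $x$, each forced to carry a second support point, which is necessarily opposite $x$; since a point opposite $x$ lies on only $t+1$ lines, all at distance $2m-1$ from $x$, double counting gives at least $(t+1)t^{m-2}\cdot t/(t+1)=t^{m-1}$ opposite support points --- precisely the missing amount.

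The second gap is in your treatment of the regular case of (ii). Theorem \ref{BS} is stated over an arbitrary field, whereas the passage from small-weight codewords to $\mathcal{X}$-blocking sets (via the vectors $\cv$) requires the nonvanishing conditions on $\F$ imposed in Theorem \ref{main}; nothing of the sort is available here. Moreover, even granting that link, your key input --- that in the regular case the smallest such blocking set is the point set of a line --- is false: Theorem \ref{distanceTraces} only yields distance traces, and Remark \ref{rem:traces} exhibits non-line $\mathcal{X}$-blocking sets of size $s+1$ in regular polygons such as $\SCHex(q)$ (distance-$2$-traces through projective points), so a blocking-set argument cannot single out lines. What is actually used is regularity directly: every pair of opposite points lies in an ideal subpolygon of order $(1,t)$, whose $2(t^m-1)(t-1)^{-1}$ points, signed $\pm 1$ along the bipartition of its collinearity graph, give a dual codeword over \emph{every} field (every line of $\Gamma$ meets the subpolygon in $0$ or $2$ points since all $t+1$ lines through each of its points belong to it); this both settles equality in (i), where your appeal to ``span/trace closures'' is only a placeholder, and supplies the orthogonality relations --- an algebraic condition strictly stronger than any blocking property --- from which \cite[Lemma 2.5]{BandS} deduces that a weight-$(s+1)$ codeword is $\lambda\cdot\i_L$.
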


A weak generalised $2m$-gon of order $(s,t)$ is called {\em regular} if every pair of opposite points is contained in a (necessarily unique) $(1,t)$- subpolygon.

The following finite thick generalised \(2m\)-gons are regular:
\begin{itemize}
	\item The generalised quadrangles \(W(q)\), with \(q\) a prime power; these have order $(q,q)$.
	\item The generalised quadrangles \(H(3,q^2)\), with \(q\) a prime power; these have order $(q^2,q)$.
	\item The split Cayley hexagons \(\mathsf{H}(q)\), with \(q\) a prime power; these have order $(q,q)$.
	\item The twisted triality hexagons \(\mathsf{T}(q^3,q)\), with \(q\) a prime power; these have order $(q^3,q).$
	\item The Ree-Tits octagons \(\mathsf{O}(q,q^2)\), with \(q\) an odd power of two; these have order $(q,q^2).$
\end{itemize}

Unless \(\mathsf{H}(q)\) is self dual (which happens if and only if $q$ is a power of $3$), its dual is not regular. Likewise, the dual twisted triality hexagon is not regular. \\

The fact that the minimum weight of the code \(\Code(\Gamma)\) where $\Gamma$ has order $(s,t)$ is at most $s+1$ is easily seen by the fact that the incidence vector of a line is contained in the code and has weight $s+1$.
The proof that equality holds as well as the characterisation result of Theorem \ref{BS} (ii) in the regular case rely entirely on the existence of subpolygons of order $(1,t)$; these are shown to define code words in the dual code. A relatively easy argument then shows that a code word in the code of weight $s+1$ which needs to be orthogonal to all those subpolygon code words, corresponds to a line \cite[Lemma 2.5]{BandS}. Without the presence of subpolygons, the situation becomes a lot more difficult.
As mentioned before, in this paper, we will show that, under certain conditions, the minimum weight of \(\Code(\Gamma)\) is still \(s+1\) even if \(\Gamma\) is not regular and we will also describe the supports of code words with this minimum weight (see Theorem \ref{main}).

\subsubsection{\texorpdfstring{Thick generalised \(3\)-gons and embeddable weak generalised polygons}{Thick generalised 3-gons and embeddable weak generalised polygons}}
A thick generalised \(n\)-gon with \(n=3\) is a projective plane. The classical example of a finite projective plane is the Desarguesian projective plane \(\text{PG}(2,q)\) but many other non-Desarguesian planes are known. The study of their associated codes has been the source of great research interest, see \cite[Chapter 6]{AssmusAndKey}. The following result regarding the codes of projective planes is well known. We also include the result for the code arising from the geometry of points and lines in a higher dimensional projective space.

\begin{theorem}\label{AK}(Assmus \& Key \cite[Theorem 6.3.1 and Corollary 5.7.5]{AssmusAndKey})
	Let $\F=\mathbb{F}_p$, $p$ prime and let \(\Gamma\) be the geometry of points and lines in $\Pi$, where $\Pi$ is an arbitrary projective plane whose order $q$ is a multiple of $p$, or $\Pi$ is the projective geometry $\PG(k,q)$, $q=p^h$. Then the minimum weight of \(\Code(\Gamma)\) is \(q+1\) and the code words of minimum weight are scalar multiples of incidence vectors of lines.
\end{theorem}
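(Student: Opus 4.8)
The plan is to prove both cases by the same blocking-set philosophy, carrying out the planar case in full and reducing the higher-dimensional case to it. For the upper bound, observe that for any line $L$ the vector $\i_L\in\Code(\Gamma)$ has weight $q+1$, so the minimum weight is at most $q+1$; it then remains to show that every nonzero codeword has weight at least $q+1$ and that those of weight exactly $q+1$ are the vectors $\lambda\,\i_L$.

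For the planar case (here $\Pi$ is \emph{any} projective plane of order $q$ with $p\mid q$), I write a codeword as $\c=\sum_{L\in\Lines}a_L\,\i_L$. The key observation is that the coordinate-sum along a line is the same for every line: for a line $M$,
\begin{align*}
\sum_{P\in\Gamma_1(M)}\c(P)
&=\sum_{L}a_L\,|\Gamma_1(L)\cap\Gamma_1(M)|\\
&=a_M(q+1)+\sum_{L\neq M}a_L\equiv\sum_{L}a_L\pmod p,
\end{align*}
since distinct lines of a plane meet in exactly one point and $p\mid q$. Write $\sigma:=\sum_L a_L$ and $S:=\mathrm{supp}(\c)$. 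If $\sigma\neq0$, then every line contains a point of $S$, so $S$ is a blocking set; by Theorem \ref{BoseBurton} this forces $|S|\ge q+1$, with equality only when $S=\Gamma_1(L)$ for a line $L$, and in that case every point of $L$ is the unique intersection of $L$ with some other line, so the constant-sum identity yields $\c=\sigma\,\i_L$. If instead $\sigma=0$, then no line can meet $S$ in exactly one point (its coordinate-sum would be nonzero), so the $q+1$ lines through any fixed point of $S$ each contain a further point of $S$, giving $|S|\ge q+2$. Hence minimum-weight words occur only when $\sigma\neq0$ and are exactly the scalar multiples of lines.

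For $\Gamma$ the points and lines of $\PG(k,q)$ with $k\ge3$, I would induct on $k$ using projection. Fix a point $V\notin S$ and project from $V$ onto a hyperplane $H\cong\PG(k-1,q)$. Because a line through $V$ has $q$ points other than $V$ and $p\mid q$, such lines push forward to $0$, while a line missing $V$ pushes forward to a line of $H$; thus the pushforward $\c'$ of $\c$ lies in $\Code(H)$ and satisfies $\c'(\bar P)=\sum_{Q\in\Gamma_1(V\bar P)}\c(Q)$, the coordinate-sum of $\c$ along the line $V\bar P$. Its support injects into that of $\c$, so $\mathrm{wt}(\c')\le\mathrm{wt}(\c)$. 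If some line through some external point has nonzero coordinate-sum I may take $\c'\neq0$, and the inductive lower bound gives $\mathrm{wt}(\c)\ge\mathrm{wt}(\c')\ge q+1$; otherwise every line not contained in $S$ has zero sum, so no line is a tangent to $S$, and the planar counting argument yields $\mathrm{wt}(\c)\ge q+2$. For the characterisation, when $\mathrm{wt}(\c)=q+1$ equality in $\mathrm{wt}(\c')=\mathrm{wt}(\c)$ forces the projection to be injective on $S$ and, by the inductive characterisation, $\c'=\mu\,\i_{L'}$ for a line $L'$; then $S$ lies in the plane $\langle V,L'\rangle$, and projecting $\c$ onto that plane exhibits a planar codeword of weight $q+1$, to which the planar case applies to conclude $\c=\lambda\,\i_L$.

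The main obstacle I anticipate is the equality analysis in the projective-space case: one must guarantee a projection centre $V$ for which $\c'\neq0$ (ruling this out leads, via the tangent-line argument, to weight at least $q+2$), and then carefully track that weight is preserved under the projection so that the support is forced into a single plane before the planar result can be invoked. The planar step itself is clean once Theorem \ref{BoseBurton} is available; it is the bookkeeping of the projections, and the no-cancellation condition that secures injectivity on the support, where the real care is needed.
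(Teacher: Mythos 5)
First, note that the paper itself gives no proof of Theorem \ref{AK}: it is quoted from Assmus and Key, and the introduction only sketches the planar blocking-set argument. So there is no internal proof to match; your proposal must be judged on its own. In substance it is correct, and your planar half is exactly the classical argument the paper alludes to: the constant line-sum $\sigma=\sum_L a_L$ (using $|\Gamma_1(L)\cap\Gamma_1(M)|\equiv 1 \pmod p$ for all $L,M$), the case $\sigma\neq 0$ handled by the blocking-set bound, and the case $\sigma=0$ by the no-tangent count. One scope caveat: Theorem \ref{BoseBurton} as stated in the paper concerns $\PG(k,q)$ only, whereas your planar case covers \emph{arbitrary} planes of order $q$; you need the classical planar fact (a blocking set in any projective plane of order $q$ has size at least $q+1$, with equality only for lines), which has the same elementary proof via the pencil of lines through a point off the set, but is not literally the cited theorem.

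For $\PG(k,q)$ your projection induction is sound for the lower bound and most of the equality analysis, but the last step --- ``projecting $\c$ onto the plane $\langle V,L'\rangle$ exhibits a planar codeword'' --- is asserted without proof. For $k=3$ the centre is a point and your earlier computation covers it; for $k\geq 4$ the centre is a subspace $W$ of dimension $k-3$, and you must check that lines disjoint from $W$ project bijectively onto lines of the plane while lines meeting $W$ push forward to $q$ times a point, hence to $0$ over $\F_p$. This is true, because $W$ is a hyperplane of each fibre subspace $\langle W,\bar P\rangle$, so no line avoiding $W$ can collapse into a fibre, but it needs saying. Alternatively you can bypass projections entirely and make the space case a one-step copy of your planar case: for any hyperplane $H'$ and $\c=\sum_L a_L\,\i_L$ one has $|L\cap H'|\in\{1,q+1\}$, so
\begin{align*}
\sum_{P\in H'}\c(P)\equiv\sum_L a_L=\sigma\pmod p
\end{align*}
for \emph{every} hyperplane; if $\sigma\neq 0$ the support blocks all hyperplanes and the paper's Theorem \ref{BoseBurton} applies verbatim (this ``blocking all hyperplanes'' form is precisely how the paper states it), while if $\sigma=0$ a tangent-hyperplane count through a support point gives weight at least $q+2$. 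Finally, for context: Assmus and Key's own proof of the $\PG(k,q)$ case goes through the machinery of generalized Reed--Muller and geometric codes, so your projection/blocking-set route, once the two gaps above are patched, is genuinely more elementary and geometric than the cited source.
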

\begin{remark}\label{remarkfield}
It is important to remark that the code \(\Code(\Gamma)\), where $\Gamma=\PG(k,q)$ is {\em trivial} when $\mathbb{F}$ is a finite field whose characteristic is different from $p$ (see e.g. \cite[Theorem 2.5]{SurveyCodes}); in this context, trivial means that the code is either the full space or the dual of the all-one vector. Note that the minimum distance of a trivial code is $1$ or $2$ respectively, which is strictly smaller than $q+1$, the weight of the incidence vector of a line.
\end{remark}

	We say that a point-line geometry \(\Gamma = (\Points,\Lines,\I)\) is {\em fully embedded} in another point-line geometry $\Gamma' = (\Points',\Lines',\I')$ if
	\(\Points\subseteq\Points'\),		\(\Lines\subseteq\Lines'\),
		and for all lines \(L\) in \(\Lines\) all points of \(\Points'\) on \(L\) are in \(\Points\).
The following statement now easily follows from Theorem \ref{AK}.

\begin{corollary}\label{cor:embedded}
	Let \(\Gamma\) be a weak generalised \(n\)-gon which is fully embedded in $\PG(k, q)$ and let \(\F\) be the field \(\F_p\) where \(p\) is a divisor of \(q\). Then the minimum weight of \(\Code(\Gamma)\) is \(q+1\).
\end{corollary}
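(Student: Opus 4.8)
The plan is to reduce the statement to Theorem~\ref{AK} by means of a weight-preserving linear embedding of \(\Code(\Gamma)\) into the code of \(\PG(k,q)\). Write \(\Gamma=(\Points,\Lines,\I)\), let \(\Points'\) denote the point set of \(\PG(k,q)\), and let \(\Gamma'=\PG(k,q)\) be the associated point-line geometry of points and lines. Since \(p\mid q\) and \(q\) is a prime power, we have \(q=p^h\), so Theorem~\ref{AK} applies to \(\Gamma'\) and tells us that the minimum weight of \(\Code(\Gamma')\) is exactly \(q+1\). We only need the numerical value, not the characterisation of the minimum-weight words.

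First I would establish the upper bound. Because \(\Gamma\) is \emph{fully} embedded in \(\PG(k,q)\), each line \(L\in\Lines\) is a line of \(\PG(k,q)\) all of whose \(q+1\) points lie in \(\Points\); hence \(\Gamma_1(L)\) has exactly \(q+1\) elements and \(\i_L\in\Code(\Gamma)\) has weight \(q+1\). This already shows that the minimum weight of \(\Code(\Gamma)\) is at most \(q+1\).

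For the lower bound, consider the linear map \(\phi\colon\FP\to\F\Points'\) that extends a function by zero outside \(\Points\), that is, \(\phi(\f)(x)=\f(x)\) for \(x\in\Points\) and \(\phi(\f)(x)=0\) for \(x\in\Points'\setminus\Points\). This map is injective and preserves weight, since it does not alter the support. The key observation, which is exactly where full embeddability enters, is that for every line \(L\in\Lines\) the set \(\Gamma_1(L)\) coincides with the set of all points of \(\PG(k,q)\) incident with \(L\); consequently \(\phi\) sends the generator \(\i_L\) of \(\Code(\Gamma)\) to the incidence vector of the line \(L\) inside \(\F\Points'\), which is a generator of \(\Code(\Gamma')\). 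By linearity \(\phi(\Code(\Gamma))\subseteq\Code(\Gamma')\).

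Finally I would combine these. Any nonzero \(\f\in\Code(\Gamma)\) maps under \(\phi\) to a nonzero word of \(\Code(\Gamma')\) of the same weight, which by Theorem~\ref{AK} is at least \(q+1\); thus the minimum weight of \(\Code(\Gamma)\) is at least \(q+1\). Together with the upper bound, this yields the claimed value \(q+1\). There is no real obstacle beyond carefully checking that \(\phi\) is weight-preserving and lands inside \(\Code(\Gamma')\), both of which follow solely from the defining property of full embedding; the substantive content of the result is entirely supplied by Theorem~\ref{AK}.
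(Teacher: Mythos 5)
Your proposal is correct and follows essentially the same route as the paper: the paper simply observes that \(\Code(\Gamma)\) is a subspace of \(\Code(\PG(k,q))\) (your map \(\phi\) is the implicit extension-by-zero identification making this precise), applies Theorem~\ref{AK} for the lower bound, and uses the incidence vector of a line of \(\Gamma\) for equality. Your version merely spells out explicitly why full embeddability makes \(\phi\) weight-preserving and carry generators to generators, which the paper leaves implicit.
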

\begin{proof}
	The code \(\Code(\Gamma)\) is a subspace of the code \(\Code(\Gamma')\), where $\Gamma'$ is the geometry of points and lines from $\PG(k,q)$. It follows from Theorem \ref{AK} that \(q+1\) is a lower bound for the minimum weight of \(\Code(\Gamma)\), and the incidence vector of any line of \(\Gamma \) shows equality.
\end{proof}

\begin{remark} Corollary \ref{cor:embedded} has two strong requirements which make its use rather limited: not all weak generalised polygons are embeddable in a projective space $\PG(k,q)$, and even for those who are embeddable, Corollary \ref{cor:embedded} doesn't say anything about the case where $\mathbb{F}$ is different from $\mathbb{F}_p$. It can be seen from Theorem \ref{BS} that it is not because the code \(\Code(\PG(k,q))\) is trivial for $\mathbb{F}\neq \mathbb{F}_p$ that its subcode \(\Code(\Gamma)\) is trivial (if it would, the minimum weight would be at most $2$). Furthermore, one should also not deduce from Theorem \ref{AK} that all minimum-weight vectors of \(\Code(\Gamma)\)  necessarily correspond to lines of $\Gamma$; it only follows that the minimum weight vectors correspond to point sets arising from a line of the ambient projective space.
\end{remark}

\subsubsection{Blocking sets in projective space}
A {\em blocking set} with respect to lines in a projective plane is a set of points such that every line contains at least one point of $B$. The study of blocking sets in projective planes dates back to the early 1950s \cite{Richardson} and the study of blocking sets in projective spaces has attracted a lot of attention in finite geometry (see e.g. \cite{blokhuis2011blocking}). 

The fundamental Bose-Burton theorem can be phrased as follows:

\begin{theorem}(Bose \& Burton \cite{BoseBurton})\label{BoseBurton}
A set $B$ of points of $\PG(k,q)$ blocking all hyperplanes has at least the size of a line, and equality holds if and only if the set $B$ is the point set of a line.
\end{theorem}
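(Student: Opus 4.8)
The plan is to prove both assertions by elementary double counting of incidences between the points of $B$ and the hyperplanes of $\PG(k,q)$, where I assume $k\ge 2$ (so that a line has exactly $q+1$ points). Write $\theta_j=\frac{q^{j}-1}{q-1}$, so that $\PG(k,q)$ has $\theta_{k+1}$ points and, by duality, the same number of hyperplanes; each point lies on $\theta_k$ hyperplanes, and each line lies in $\theta_{k-1}$ hyperplanes. For the lower bound I would simply observe that the number of hyperplanes meeting $B$ is at most $|B|\,\theta_k$, while $B$ must block all $\theta_{k+1}$ hyperplanes. Hence $\theta_{k+1}\le |B|\,\theta_k$, and since $\theta_{k+1}/\theta_k=q+\frac{q-1}{q^{k}-1}>q$, this forces $|B|>q$, i.e. $|B|\ge q+1$.

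For the equality case, assume $|B|=q+1$ and let $a_i$ denote the number of hyperplanes meeting $B$ in exactly $i$ points. The blocking hypothesis gives $a_0=0$, so I have three counting identities: $\sum_i a_i=\theta_{k+1}$ (total hyperplanes), $\sum_i i\,a_i=(q+1)\theta_k$ (point--hyperplane incidences), and $\sum_i \binom{i}{2}a_i=\binom{q+1}{2}\theta_{k-1}$ (pairs of points of $B$ lying together in a hyperplane, each such pair determining a line contained in $\theta_{k-1}$ hyperplanes). The key step is to combine these into the single moment $\sum_i (i-1)\bigl(i-(q+1)\bigr)a_i$; expanding and using the recursion $\theta_{j+1}=q\theta_j+1$, one checks that this expression collapses identically to $0$. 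Since every hyperplane meets $B$ in between $1$ and $q+1$ points, each summand $(i-1)(i-(q+1))a_i$ is $\le 0$, so a vanishing sum forces every summand to vanish. Thus every hyperplane meets $B$ in exactly $1$ or exactly $q+1$ points.

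To conclude that $B$ is a line, I would pick two distinct points $p,p'\in B$ and let $\ell$ be the line joining them. If some $r\in B$ lay off $\ell$, then, using that the hyperplanes through $\ell$ intersect in exactly $\ell$, there would exist a hyperplane $H\supseteq\ell$ with $r\notin H$. This $H$ meets $B$ in at least the two points $p,p'$ but not in all of $B$, so $1<|H\cap B|<q+1$, contradicting the dichotomy just established. Hence $B\subseteq\ell$, and since $|B|=q+1=|\ell|$ we obtain $B=\ell$. Conversely, the point set of a line clearly blocks every hyperplane and has size $q+1$.

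I expect the arithmetic collapse of the moment $\sum_i(i-1)(i-(q+1))a_i$ to be the main obstacle: getting the three counts exactly right (in particular the pair count, which quietly uses that every two points of $\PG(k,q)$ lie on equally many hyperplanes) and verifying that the weighted combination vanishes is where the real content lies. By comparison, the union-bound lower bound and the final collinearity argument are comparatively routine.
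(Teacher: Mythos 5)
Your argument is correct and complete. One thing to flag up front: the paper offers no proof of this theorem at all --- it is quoted from Bose and Burton as a classical result --- so your proposal must be judged on its own merits, and it checks out. The union bound gives $\theta_{k+1}\le|B|\,\theta_k$ with $\theta_{k+1}/\theta_k=q+\frac{q-1}{q^k-1}>q$, hence $|B|\ge q+1$. The second-moment identity is right: writing $(i-1)\bigl(i-(q+1)\bigr)=2\binom{i}{2}-(q+1)i+(q+1)$, your three counts yield $\sum_i(i-1)\bigl(i-(q+1)\bigr)a_i=(q+1)\bigl(q\theta_{k-1}-(q+1)\theta_k+\theta_{k+1}\bigr)$, which vanishes by $\theta_{k+1}=q\theta_k+1$ and $\theta_k=q\theta_{k-1}+1$; since $a_0=0$ and $1\le i\le q+1$, each summand is nonpositive, so $a_i=0$ for $2\le i\le q$. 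Your final step is also sound: the hyperplanes through the line $\ell$ joining $p,p'\in B$ intersect in exactly $\ell$, so a point $r\in B\setminus\ell$ would produce a hyperplane meeting $B$ in at least $2$ and at most $q$ points, contradicting the dichotomy, whence $B=\ell$. Methodologically, this ``variance trick'' with intersection numbers is a genuinely different and shorter route than Bose and Burton's original inductive argument, which proves the more general statement that a set meeting every $(k-d)$-flat has at least $\theta_{d+1}$ points with equality only for $d$-dimensional subspaces; your computation specialises cleanly to the $d=1$ case, and you correctly identify the one place where projective-space homogeneity is quietly used, namely that any two points lie on exactly $\theta_{k-1}$ common hyperplanes (those through their joining line). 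A pedantic remark: for $k=1$ the theorem is trivially true (the only blocking set is the whole line, which is itself a line), so your standing assumption $k\ge2$ costs nothing.
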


Theorem \ref{BoseBurton} provides a combinatorial characterisation of a line: (the point set of) a line blocks the set $\mathcal{H}$ of all hyperplanes, and this theorem shows that all $\mathcal{H}$-blocking sets of this size are given by lines.
It is a natural question to consider the same problem for other geometries. We consider (the point set of) a line, find out which set $\mathcal{X}$ of structures is blocked by this point set, and then determine whether or not the only possible $\mathcal{X}$-blocking sets of this size are given by lines.
We will see in Theorem \ref{distanceTraces} that the answer to the latter question for generalised $2m$-gons is, in general, not necessarily yes, contrasting the case of thick generalised $3$-gons.


\section{Blocking sets in weak generalised polygons}\label{section:BlockingSet}

We have seen in Theorem \ref{AK} (and Remark \ref{remarkfield}) that the problem of finding the minimum weight of the code of generalised $3$-gons has been settled, so from now on we will let  \(\Gamma = (\Points, \Lines, \I)\)  denote a weak generalised \(2m\)-gon of order \((s,t)\), with \(m\in \N \ge 2\), $s,t\geq 1$. 
\begin{remark}\label{possibleZero}
	Let \(\F\) be a field and let \(k\) be a natural number. We see that
	\(k\cdot 1_\F := 1_\F+1_\F+\dots+1_\F \text{ (\(k\) times)}\) and \( -k\cdot 1_\F := -(k\cdot 1_\F) \) are elements of \(\F\). This allows us to evaluate any integer in \(\F\) and, with some abuse of notation, use integers as parameters in \(\F\). Note however, that it is possible for this parameter to evaluate to \(0\) in \(\F\).
\end{remark}

Finally, since we will only be concerned with the code of the $2m$-gon \(\Gamma = (\Points, \Lines, \I)\), we will omit $\Gamma$ and use \(\mathfrak{C}_{\mathbb{F}}\) for \( \mathfrak{C}_{\mathbb{F}}(\Gamma)\), the vector subspace of \(\FP\) generated by \(\{\i_L\mid L\in\Lines \}\).

\subsection{Some elementary geometric properties}

\begin{definition}\label{GammaSets}\label{def:GammaSets}
	Let $\Gamma=(\Points,\Lines, \I)$ be a point-line geometry and let $x$ be an element of $\Gamma$. We define the following sets:
	\begin{align*}
		\Gamma_i(x) &:= \{y \in \Points\cup \Lines : \delta(x,y) = i\}, \\
		\Gamma_{\le i}(x) &:= \{y \in \Points\cup\Lines: \delta(x,y) \le i\}, \\
		\Points_i(x) &:= \{y \in \Points: \delta(x,y) = i\}, \\
		\Points_{\le i}(x) &:= \{y \in \Points: \delta(x,y) \le i\}, \\ 
		\Lines_i(x) &:= \{y \in \Lines: \delta(x,y) = i\}, \\
		\Lines_{\le i}(x) &:= \{y \in \Lines: \delta(x,y) \le i\} ,
	\end{align*}
	where \(\delta(x,y)\) is the distance between \(x\) and \(y\) (inherited from the incidence graph).
\end{definition}
Note that if $i\leq 0$, the sets in Definition \ref{GammaSets} are still defined, but are empty. The following easily follows from the definition.
\begin{corollary}\label{gammaProperties}
	\begin{enumerate}[(i)]
		\item For a point \(x\in\Points\), \(\Points_i(x) = \emptyset\) if $i$ is odd, and   \(\Points_i(x) = \Gamma_i(x)\) if $i$ is even.
		\item For a line \(L\in\Lines\), \(\Points_i(L) = \Gamma_i(L)\) if $i$ is odd and \(\Points_i(L) = \emptyset\) if $i$ is even. 
	\end{enumerate}
\end{corollary}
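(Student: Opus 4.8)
The plan is to reduce everything to the bipartite structure of the incidence graph together with the elementary fact that, in a bipartite graph, the parity of the distance between two vertices is determined by whether or not they lie in the same part of the bipartition.

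First I would recall that the incidence graph of $\Gamma$ has vertex classes $\Points$ and $\Lines$, and that the distance $\delta$ appearing in Definition \ref{def:GammaSets} is precisely the graph distance in this bipartite graph. Since every edge of a bipartite graph joins the two classes, any walk alternates between $\Points$ and $\Lines$; in particular a walk of length $i$ starting at a vertex of one class ends in the same class when $i$ is even and in the other class when $i$ is odd. Because all walks between a fixed pair of vertices have the same length parity, this applies in particular to a shortest path, so the distance between two points is always even, the distance between two lines is always even, and the distance between a point and a line is always odd.

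For part (i), I would fix a point $x\in\Points$. If $i$ is odd, then no point can lie at distance $i$ from $x$ (such a distance would have to be even), whence $\Points_i(x)=\emptyset$. If $i$ is even, then conversely every element at distance $i$ from $x$ must be a point rather than a line, since a point-to-line distance is odd; therefore $\Gamma_i(x)\subseteq\Points$ and hence $\Gamma_i(x)=\Points_i(x)$. Part (ii) is the analogous statement for a line $L$, with the two parities interchanged: an element at odd distance from $L$ is a point, so $\Gamma_i(L)=\Points_i(L)$ for odd $i$, while an element at even distance from $L$ is a line, so $\Points_i(L)=\emptyset$ for even $i$. The degenerate cases $i\le 0$ are immediate and consistent with these statements (for instance $\Points_0(x)=\{x\}=\Gamma_0(x)$ and $0$ is even).

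Since the whole argument rests on this single parity observation, there is no genuine obstacle; the only point requiring a little care is to state the parity fact as a claim about the graph distance rather than about arbitrary walks, but this is harmless because every walk between a fixed pair of vertices in a bipartite graph has the same length parity as the shortest one.
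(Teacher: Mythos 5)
Your proof is correct and is exactly the argument the paper leaves implicit: the corollary is stated without proof (``easily follows from the definition''), and the intended justification is precisely your observation that the incidence graph is bipartite with parts \(\Points\) and \(\Lines\), so point--point and line--line distances are even while point--line distances are odd. Nothing further is needed.
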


We will now investigate the possible intersection of some sets introduced in Definition \ref{def:GammaSets}. In general, we have the following:
\begin{lemma}\label{intersections1}
	Let \(x\) be an element of $\Gamma$. Let $y_1, y_2$ be elements with $\delta(x,y_1)=\delta(x,y_2)=d_1$ and $\delta(y_1,y_2)=2d_1$ with \(d_1\in\{1,\dots,m-1\}\). 
    
    Let \(d_2\in\{1,\dots,2m-d_1-1\}\).
	Then,
	\[\Gamma_{\le d_2-d_1}(x) = \Gamma_{\le d_2}(y_1)\cap\Gamma_{\le d_2}(y_2).\]
\end{lemma}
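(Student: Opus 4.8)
The plan is to work entirely inside the incidence graph of $\Gamma$, which by Corollary \ref{incidenceGraph} has diameter $2m$ and girth $4m$; all distances, paths and geodesics below are taken in this graph, and points and lines are treated uniformly as vertices. The first ingredient I will record is the uniqueness of geodesics between non-opposite elements: if $\delta(p,q)=\ell<2m$ then the shortest $p$--$q$ path is unique, since two distinct such paths would produce a closed walk, hence a cycle, of length at most $2\ell<4m$, contradicting the girth. Since $\delta(y_1,y_2)=2d_1\le 2m-2<2m$, this yields a unique geodesic $\gamma\colon y_1=v_0,v_1,\dots,v_{2d_1}=y_2$, and the hypotheses $\delta(x,y_1)=\delta(x,y_2)=d_1$ force $x=v_{d_1}$ to be its midpoint.

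The inclusion $\Gamma_{\le d_2-d_1}(x)\subseteq \Gamma_{\le d_2}(y_1)\cap\Gamma_{\le d_2}(y_2)$ is immediate from the triangle inequality: if $\delta(x,z)\le d_2-d_1$ then $\delta(y_i,z)\le \delta(y_i,x)+\delta(x,z)\le d_1+(d_2-d_1)=d_2$ for $i=1,2$. All the content is in the reverse inclusion, so fix $z$ with $a:=\delta(z,y_1)\le d_2$ and $b:=\delta(z,y_2)\le d_2$; I must show $\delta(z,x)\le d_2-d_1$. Note $a,b\le d_2\le 2m-d_1-1<2m$, so $z$ is opposite to neither $y_1$ nor $y_2$, and, being distances to two vertices of the same part of the bipartite graph, $a\equiv b\pmod 2$. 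Fix geodesics $[z,y_1]$ and $[z,y_2]$ and let $u$ be the vertex of $[z,y_1]\cap[z,y_2]$ farthest from $z$, with $r:=\delta(z,u)$; since a vertex strictly beyond $u$ on either geodesic is strictly farther from $z$, the two arcs $[u,y_1]$ and $[u,y_2]$ meet only in $u$, so concatenating them yields a simple path $\pi$ from $y_1$ to $y_2$ of length $(a-r)+(b-r)=a+b-2r$.

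The crux is to show $\pi=\gamma$, and this is exactly where the hypothesis on $d_2$ enters. If $\pi\ne\gamma$ then, being two distinct simple $y_1$--$y_2$ paths, they contain a cycle of length at most $(a+b-2r)+2d_1$; but $a+b\le 2d_2\le 4m-2d_1-2$, so this cycle would have length at most $4m-2-2r<4m$, contradicting the girth. Hence $\pi=\gamma$, so $u=v_k$ lies on $\gamma$ with $k=a-r$, and the midpoint $x=v_{d_1}$ lies on $\gamma$ on the $y_1$-side or the $y_2$-side of $u$ according as $d_1\le k$ or $d_1\ge k$. In either case the relevant arc of $\gamma$ is a terminal arc of one of the geodesics $[z,y_1]$ or $[z,y_2]$, so $x$ lies on that geodesic; subtracting the known value $\delta(x,y_i)=d_1$ then gives $\delta(z,x)=\max(a,b)-d_1\le d_2-d_1$, as required. (When $d_2<d_1$ this same computation forces both sides to be empty, matching the convention that $\Gamma_{\le j}(\cdot)$ is empty for $j<0$.)

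I expect the main obstacle to be the reverse inclusion, and within it the step $\pi=\gamma$: one must introduce the branch vertex $u$ carefully enough that $\pi$ is genuinely simple, and then see precisely how the bound $d_2\le 2m-d_1-1$ is exactly what makes the two competing $y_1$--$y_2$ paths jointly too short to coexist without violating the girth $4m$. The remaining bookkeeping — the parity observation $a\equiv b\pmod 2$ and the two symmetric sub-cases for the position of $x$ relative to $u$ — is routine once $\pi=\gamma$ is established.
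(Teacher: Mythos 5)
Your proof is correct and rests on exactly the same mechanism as the paper's: combining the $z$--$y_1$, $y_1$--$y_2$ and $y_2$--$z$ paths and playing the resulting length bound $2d_2+2d_1\le 4m-2$ against the girth $4m$ of the incidence graph (Corollary \ref{incidenceGraph}). The paper's proof is a two-line sketch that directly asserts the combined closed walk is a cycle; your additional work --- the branch vertex $u$, the identification $\pi=\gamma$ via uniqueness of geodesics, and the explicit computation $\delta(z,x)=\max(a,b)-d_1$ --- is a rigorous filling-in of precisely the degenerate case (the union of the three paths containing no cycle) that the paper's argument passes over, so it is the same approach carried out in full detail.
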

\begin{proof}
	We clearly have that
	\[\Gamma_{\le d_2-d_1}(x) \subseteq \Gamma_{\le d_2}(y_1)\cap\Gamma_{\le d_2}(y_2).\]
	Now assume that there is an element \(z\) in \(\Gamma_{\le d_2}(y_1)\cap\Gamma_{\le d_2}(y_2)\) with \(z\not\in\)  \(\Gamma_{\le d_2-d_1}(x)\). Then, combining the paths from \(z\) to \(y_1\), from \(y_1\) to \(y_2\) and from \(y_2\) back to \(z\), we find a cycle of length at most \(d_2 + 2d_1+ d_2 \le 4m-2d_1-2+2d_1 = 4m-2.\) This contradicts Theorem \ref{incidenceGraph} which states that the incidence graph of a \(2m\)-gon has girth \(4m\).
\end{proof}

As a corollary, we determine what happens in some specific cases.
\begin{corollary}\label{intersections2}
\begin{enumerate}[(i)]
\item 	Let \(x\) be an element of $\Gamma$. Let \(y_1\neq y_2 \in \Gamma_1(x)\) and let \(d\in\{1,\dots,2m-2\}\).
	Then,
	\[\Gamma_{\le d - 1}(x) = \Gamma_{\le d}(y_1)\cap\Gamma_{\le d}(y_2).\]

\item  Let \(L\) be a line of \(\Gamma\) and let \(r\in \{1,2,\dots,m-1\}\).
	\begin{enumerate}[(a)]
		\item If 
		\(M_1\) and \(M_2\) are two disjoint lines intersecting \(L\),
		then
		\[\Points_{\le 2r-3}(L) = \Points_{\le 2r-1}(M_1)\cap\Points_{\le 2r-1}(M_2).\]
		\item If
		\(v_1\) and \(v_2\) are two distinct points on \(L\), then
		\[\Points_{\le 2r-1}(L) = \Points_{\le 2r}(v_1)\cap\Points_{\le 2r}(v_2).\]
	\end{enumerate}
\end{enumerate}
\end{corollary}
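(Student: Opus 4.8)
The plan is to obtain all three identities as special cases of Lemma \ref{intersections1}, after choosing the parameters $x, y_1, y_2, d_1, d_2$ appropriately and, for part (ii), restricting the resulting equality of $\Gamma$-sets to points. The only ingredients needed are: the verification that the chosen elements satisfy $\delta(x,y_1)=\delta(x,y_2)=d_1$ and $\delta(y_1,y_2)=2d_1$; the check that the index $d_2$ lies in the admissible range $\{1,\dots,2m-d_1-1\}$ (and $d_1\le m-1$); and the trivial observation that $\Gamma_{\le k}(z)\cap\Points=\Points_{\le k}(z)$ for every element $z$, which is immediate from Definition \ref{def:GammaSets}.

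For part (i), I would take $d_1=1$ and $d_2=d$, with the given $x$ and $y_1,y_2\in\Gamma_1(x)$. Since $y_1$ and $y_2$ are both incident with $x$, they are of the same type (both lines if $x$ is a point, both points if $x$ is a line); being distinct and of equal type, they cannot be incident, so $\delta(y_1,y_2)\ge 2$, while the path $y_1\,\I\,x\,\I\,y_2$ shows $\delta(y_1,y_2)\le 2$. Hence $\delta(y_1,y_2)=2=2d_1$, and the hypothesis $d\in\{1,\dots,2m-2\}$ is exactly the range $\{1,\dots,2m-d_1-1\}$ required by the lemma (while $d_1=1\le m-1$ holds since $m\ge 2$). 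As this statement is phrased for $\Gamma$-sets, Lemma \ref{intersections1} gives it directly, with no passage to points.

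For part (ii)(b) I would take $x=L$, $y_1=v_1$, $y_2=v_2$, $d_1=1$ and $d_2=2r$; the distance computation is identical to that of part (i) (two distinct points on a common line are at distance $2$), and $d_2=2r\le 2m-2$ follows from $r\le m-1$. For part (ii)(a) I would take $x=L$, $y_1=M_1$, $y_2=M_2$, $d_1=2$ and $d_2=2r-1$. Here each $M_i$ meets $L$ in a point and $M_i\ne L$ (neither can equal $L$, since $M_1,M_2$ are disjoint yet both meet $L$), so $\delta(L,M_i)=2$; moreover $M_1,M_2$ being disjoint distinct lines forces $\delta(M_1,M_2)\ge 4$, while the path through $L$ gives $\delta(M_1,M_2)\le 4$, whence $\delta(M_1,M_2)=4=2d_1$. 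In each case one checks $d_2=2r-1\le 2m-3$ and $d_2=2r\le 2m-2$ respectively, so the lemma applies and yields an identity of $\Gamma$-sets; intersecting both sides with $\Points$ and using $\Gamma_{\le k}(z)\cap\Points=\Points_{\le k}(z)$ converts it into the stated identity of point-sets.

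The one step requiring care — and the main (mild) obstacle — is the hypothesis $d_1\le m-1$ of Lemma \ref{intersections1}. It holds automatically for parts (i) and (ii)(b), where $d_1=1$, but in part (ii)(a) we have $d_1=2$, so the lemma applies only when $m\ge 3$. The leftover case $m=2$ forces $r=1$, and there the claimed identity reads $\Points_{\le -1}(L)=\Points_{\le 1}(M_1)\cap\Points_{\le 1}(M_2)$: the left-hand side is empty by convention, and the right-hand side is the set of points on both $M_1$ and $M_2$, which is empty because $M_1$ and $M_2$ are disjoint. Thus the boundary case is settled directly, and it is the only part of the argument not covered by a verbatim application of Lemma \ref{intersections1}.
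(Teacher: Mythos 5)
Your proof is correct and follows the paper's own route exactly: part (i) is Lemma \ref{intersections1} with $d_1=1$, part (ii)(a) is the lemma with $d_1=2$, $d_2=2r-1$, part (ii)(b) is the $d_1=1$, $d_2=2r$ case, each time restricting the equality of $\Gamma$-sets to $\Points$ via $\Gamma_{\le k}(z)\cap\Points=\Points_{\le k}(z)$. One point where you are actually more careful than the paper: in (ii)(a) the paper invokes Lemma \ref{intersections1} with $d_1=2$ without comment, even though the lemma's hypothesis $d_1\in\{1,\dots,m-1\}$ excludes the case $m=2$ (generalised quadrangles, where $r=1$ is the only admissible value); your direct verification that both sides of $\Points_{\le -1}(L)=\Points_{\le 1}(M_1)\cap\Points_{\le 1}(M_2)$ are empty --- the left by the paper's convention for $i\le 0$, the right by disjointness of $M_1$ and $M_2$ --- cleanly closes this small gap.
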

\begin{proof}
\begin{enumerate}[(i)]
\item 	This follows from Lemma \ref{intersections1} with \(d_1 = 1\).
\item \begin{enumerate}[(a)]\item We have that $\delta(L,M_1)=\delta(L,M_2)=2$ and $\delta(M_1,M_2)=4$. Lemma \ref{intersections1} with $d_1=2$ and  $d_2=2r-1$ shows that $$\Gamma_{\leq 2r-3 }(L)=\Gamma_{\leq 2r-1}(M_1)\cap \Gamma_{\leq 2r-1}(M_2).$$
Restricting these sets to their intersection with $\mathcal{P}$ yields the desired result.
\item This follows from noting that $\delta(v_1,v_2)$ is even and restricting the equality found in part 1 to the point set $\Points$.
\end{enumerate}
\end{enumerate}
\end{proof}



The following lemma will be used in Subsection \ref{subsection:linesasbs}. 
\begin{lemma}\label{starLemma}
	Let \(C\) be a set of points of $\Gamma$ with \(|C|<s+1\). Let \(d\in \{1,2,\dots,m-1\}\).
	\begin{enumerate}[(i)]
		\item \(\forall L \in \Lines: \exists M \in \Lines_{\le 2d-2}(L): C\cap \Points_{\le2d-1}(M) = C\cap \Points_{1}(L)\).
		\item \(\forall L \in \Lines: \exists v \in \Points_{\le 2d-1}(L): C\cap \Points_{\le2d}(v) = C\cap \Points_{1}(L)\).
	\end{enumerate}
\end{lemma}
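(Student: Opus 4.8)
The plan is to treat (i) and (ii) by a single construction, the only difference being the parameters: in (i) the target is a line $M$ at distance $2d-2$ from $L$ carrying radius $2d-1$, while in (ii) it is a point $v$ at distance $2d-1$ carrying radius $2d$; in both cases the target sits one step closer to $L$ than the radius of the ball it carries. Writing $z$ for the target (so $z=M$ in (i) and $z=v$ in (ii)), I first observe that the inclusion $C\cap\Points_1(L)\subseteq\Points_{\le 2d-1}(M)$ (resp. $\subseteq\Points_{\le 2d}(v)$) is automatic from a short distance computation, once $z$ is placed at the stated distance from $L$ and close to a point of $L$. Hence the entire content of the lemma is the \emph{reverse} inclusion: the ball around $z$ must meet $C$ only on $L$, i.e. it must avoid every point of $C\setminus\Points_1(L)$.

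The construction of $z$ rests on a pigeonhole argument. By Corollary \ref{uniqueClosest}, every $c\in C\setminus\Points_1(L)$ has a unique closest point $\pi(c)$ on $L$. Since $|C|<s+1=|\Points_1(L)|$, the image of $\pi$ cannot exhaust $\Points_1(L)$, so I fix a point $p_0\in\Points_1(L)$ that is not the closest point of any element of $C\setminus\Points_1(L)$. I then build $z$ by taking a non-backtracking path that leaves $L$ through $p_0$ and continues away from $L$ until it reaches a line at distance $2d-2$ (for (i)) or a point at distance $2d-1$ (for (ii)); such a path exists because $s,t\ge 1$ forces every vertex of the incidence graph to have degree at least two, and the required distance is smaller than the diameter $2m$. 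For part (i) with $d=1$ this degenerates to the trivial choice $M=L$, and (ii) with $d=1$ gives $v=p_0$.

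The main step is to verify that the resulting ball avoids $C\setminus\Points_1(L)$, and the clean way to do this is to reason inside the ball. Since its radius ($2d$ or $2d-1$) is strictly less than $2m$ and the incidence graph has girth $4m$ by Corollary \ref{incidenceGraph}, the ball is an \emph{isometric tree}: distances and geodesics inside it agree with the global ones. Supposing some $c\in C\setminus\Points_1(L)$ lay in the ball, I would compare the in-tree geodesic from $z$ to $L$ (which passes through $p_0$ by construction) with the geodesic from $z$ to $c$, and examine their last common vertex $b$. If $b=L$, the $z$-to-$c$ geodesic continues past $L$, so $\delta(z,c)=\delta(z,L)+\delta(L,c)$, which exceeds the radius because $c\notin L$ gives $\delta(L,c)\ge 3$; if instead $b$ lies on the $p_0$-side of the path, then the geodesic from $c$ to $L$ runs through $p_0$, forcing $\pi(c)=p_0$ and contradicting the choice of $p_0$. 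Either way we reach a contradiction, so the ball is disjoint from $C\setminus\Points_1(L)$; combined with the automatic inclusion this yields the required equality.

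The subtlety I expect to be the crux is precisely this final dichotomy. A naive argument that ``distances add up through $p_0$'' breaks down for points $c$ of $C$ that are far from—or opposite to—$L$, since the alleged distance can then exceed the diameter $2m$ and the geodesic need no longer be unique. Passing to the isometric ball around $z$ sidesteps this entirely, because there one only reasons about vertices within the guaranteed tree radius, and the points of $C$ lying \emph{outside} the ball are exactly those we wish to discard and never need to locate precisely. Finally, the hypothesis $|C|<s+1$ is used once and only once—to produce $p_0$—and it matches the number $s+1$ of points of $L$ exactly, which is why no separate counting over the lines through a point (and hence no bound involving $t$) is ever needed.
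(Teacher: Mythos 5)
Your construction fails, and it fails exactly where you locate the crux: the claim that the ball around $z$ is an \emph{isometric} tree. In a graph of girth $4m$, the ball of radius $r$ does induce a tree whenever $2r+1<4m$, but it is isometric only for $r\le m$: two vertices of the ball can have tree-distance up to $2r$, and once this exceeds what the girth can police (a tree-path of length $t_1\le 2r$ plus an outside path of length $t_2$ only forces a contradiction if $t_1+t_2<4m$), the true geodesic may leave the ball. Your radii are $2d-1$ in (i) and $2d$ in (ii), and these exceed $m$ precisely in the range the paper needs, namely $d=m-1$ (this is how Lemma \ref{starLemma} is invoked in Lemmas \ref{smallC} and \ref{smallc}). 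A concrete counterexample for part (ii) already lives in a generalised hexagon: take an apartment, i.e.\ a $12$-cycle $u_0,u_1,\dots,u_{11}$ of the incidence graph, which is isometrically embedded, and set $L=u_0$, $p_0=u_1$, $d=2$, with your non-backtracking path $u_1,u_2,u_3$, so $v=u_3\in\Points_{\le 3}(L)$, and let $C=\{u_7\}$. Then $\delta(u_7,L)=5$, so $u_7\notin\Points_1(L)$, and the projection of $u_7$ onto $L$ is $u_{11}\neq p_0$, so $p_0$ is a legitimate choice under your rule; yet $\delta(u_7,v)=4=2d$, so $C\cap\Points_{\le 2d}(v)=\{u_7\}$ while $C\cap\Points_1(L)=\emptyset$. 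In your dichotomy this is the case $b=z$, and the second horn draws a false conclusion: the geodesic from $u_7$ to $L$ wraps around the apartment through $u_{11}$, outside your ball, while the walk through $p_0$ has length $4+3=7>2m$ and is no geodesic. The same wrap-around kills part (i) for octagons with $d=3$ (take a $16$-cycle, $M=u_4$, $C=\{u_9\}$: then $\delta(u_9,M)=5=2d-1$ but $\delta(u_9,L)=7$ and the projection of $u_9$ is $u_{15}\neq p_0$). Your argument is sound where the radius stays at most $m$ — all of $m=2$, and part (i) for hexagons — but not in general.

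The missing idea, which you flag yourself as a feature ("the hypothesis $|C|<s+1$ is used once and only once"), is that the counting hypothesis must be spent at \emph{every} step: the continuation of the path beyond $p_0$ cannot be arbitrary, but must be chosen adaptively so that each newly swallowed annulus avoids $C$. That is precisely the paper's proof: induction on $d$, where at each step one labels the $s+1$ points $v_1,\dots,v_{s+1}$ of the current line $M$, takes a line $N_i\neq M$ through each $v_i$, observes via Corollary \ref{intersections2} that the sets $\Points_{\le 2(k+1)-1}(N_i)\setminus\Points_{\le 2(k+1)-3}(M)$ are pairwise disjoint, and applies the pigeonhole principle with $|C|\le s$ to pick a branch whose annulus misses $C$; one further application of the same pigeonhole over the points of $M$ yields part (ii). A single choice of $p_0$ cannot control points of $C$ that are far from, or opposite to, $L$, because the geometry genuinely contains the wrap-around configurations above; the iterated pigeonhole is what rules them out.
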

\begin{proof}
	\(\)
	\begin{enumerate}[(i)]
		\item We prove this by induction on \(d\). The base case \(d=1\) follows by taking $M=L$.
		Assume that the statement holds for some \(k\in \N\) with \(1\le k < m-1\) and consider the case \(d=k+1\). 
		Let \(L\in\Lines\) be an arbitrary line of \(\Gamma\). Then there exists an \(M\in \Lines_{\le 2k-2}(L)\) such that \(C\cap \Points_{\le2k-1}(M) = C\cap \Points_{1}(L)\). 
		
		We label the \(s+1\) points on \(M\) as \(\{v_1,v_2,\dots,v_{s+1}\}\). For each of these points \(v_i\) we can consider a line \(N_i\) through \(v_i\) and different from \(M\). Note that the lines $N_1,\ldots,N_{s+1}$ are mutually disjoint.
		
		By Corollary \ref{intersections2}(2a), we now observe that for any two of these lines \(N_i\) and \(N_j\) with \(i\neq j\), we have \[\Points_{\le 2(k+1)-1}(N_i)\cap \Points_{\le 2(k+1)-1}(N_j)= \Points_{\le 2(k+1)-3}(M).\]
		
		Since  \(|C|<s+1\), at least one of the $s+1$ disjoint sets \(\Points_{\le 2(k+1)-1}(N_i)\setminus \Points_{\le 2(k+1)-3}(M) \), $i \in \{1,2,\ldots,s+1\}$, must have an empty intersection with \(C\). Without loss of generality, we may assume 
		\[(\Points_{\le 2(k+1)-1}(N_1)\setminus \Points_{\le 2(k+1)-3}(M)) \cap C = \emptyset.\]
		This means that 
		\[C\cap \Points_{\le2(k+1)-1}(N_1) = C\cap \Points_{\le2(k+1)-3}(M) = C\cap \Points_{1}(L),\]
		 where we have invoked the induction hypothesis.
         
		Since \(M\in \Lines_{2k-2}(L)\) and \(N_1\in\Lines_{2}(M)\),  we see that \(N_1\in   \Lines_{2k-2+2}(L)=\Lines_{2k}(L)=\Lines_{2(k+1)-2}(L)\). The statement follows.
		
		\item Let \(L\in\Lines\) be an arbitrary line of \(\Gamma\). 
		From part (i), we know that there exists a line \(M\in \Lines_{\le 2d-2}(L)\) such that \(C\cap \Points_{\le2d-1}(M) = C\cap \Points_{1}(L)\).
		
		We label the \(s+1\) points on \(M\) as \(\{v_1,v_2,\dots,v_{s+1}\}\). 

		By Corollary \ref{intersections2}(2b), we have that for \(i\neq j\),  \[\Points_{\le 2d}(v_i)\cap \Points_{\le 2d}(v_j)= \Points_{\le 2d-1}(M).\]
		
		Since \(|C|<s+1\) points, at least one of the disjoint  sets \(\Points_{\le 2d}(v_i)\setminus \Points_{\le 2d-1}(M) \) must have an empty intersection with \(C\). Without loss of generality, we may assume 
		\[(\Points_{\le 2d}(v_1)\setminus \Points_{\le 2d-1}(M)) \cap C = \emptyset.\]
		This means that 
		\[C\cap \Points_{\le2d}(v_1) = C\cap \Points_{\le2d-1}(M) = C\cap \Points_{1}(L)\]
		
		Since $v_1\in \Points_1(M)$ and $M\subseteq \Lines_{\leq 2d-2}(L)$, we see that \(v_1\in \Points_{1}(M) \subseteq \Points_{\leq 2d-1}(L)\) and the statement follows.\qedhere
	\end{enumerate}
\end{proof}

\subsection{\texorpdfstring{Lines as $\mathcal{X}$-blocking sets in weak generalised polygons}{Lines as blocking sets in weak generalised polygons}}\label{subsection:linesasbs}

Let $\mathcal{S}$ be a set whose elements are subsets of points of a weak generalised polygon $\Gamma$. An {\em $\mathcal{S}$-blocking set} in $\Gamma$ is a set of points $B$ such that for all $S\in \mathcal{S}$, $B\cap S\neq \emptyset.$ Using this notation, we see that a blocking set with respect to lines in $\Gamma$ is a $\mathcal{T}$-blocking set where $\mathcal{T}=\{\mathcal{P}_1(L)\mid L\in \Lines\}$.

From now on, let $\mathcal{X}=\{\mathcal{P}_{\leq 2m-2}(v)\mid v \in \Points\}$. 
The following lemma shows that (the point set of) a line $L$ is an $\mathcal{X}$-blocking set.

\begin{lemma}\label{G4s}
	Let \(v\) be a point of \(\Gamma\). Then, every line \(L\) intersects \(\Points_{\le 2m-2}(v)\) in either \(1\) or \(s+1\) points.   
    
\end{lemma}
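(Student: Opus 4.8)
The plan is to analyse the distance $\delta(v,L)$ between the point $v$ and the line $L$ and to exploit the uniqueness of the closest point on $L$ supplied by Corollary \ref{uniqueClosest}. First I would record two elementary observations. Since the incidence graph is bipartite with points and lines on opposite sides, $\delta(v,L)$ is odd; and since the diameter of the incidence graph equals $2m$ (Corollary \ref{incidenceGraph}), we have $\delta(v,L)\in\{1,3,\dots,2m-1\}$. Second, I would reformulate the target set: $\Points_{\le 2m-2}(v)$ is exactly the set of points that are \emph{not} opposite to $v$, because by Corollary \ref{gammaProperties}(i) every point lies at even distance from $v$, and the only even value exceeding $2m-2$ is $2m$.

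The crux is to pin down, for each point $p$ incident with $L$, the value of $\delta(v,p)$. As $p$ is incident with $L$, the two distances $\delta(v,p)$ and $\delta(v,L)$ differ by at most $1$; and since $\delta(v,p)$ is even while $\delta(v,L)$ is odd, we must have $\delta(v,p)\in\{\delta(v,L)-1,\ \delta(v,L)+1\}$. A point $p$ realising the smaller value $\delta(v,L)-1$ is a point on $L$ nearest to $v$, and Corollary \ref{uniqueClosest} guarantees there is exactly one such point, say $p_0$. Consequently $\delta(v,p_0)=\delta(v,L)-1$ while every other point $p$ on $L$ satisfies $\delta(v,p)=\delta(v,L)+1$. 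This dichotomy is the heart of the argument.

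It then remains to split into two cases according to the value of $\delta(v,L)$. If $\delta(v,L)=2m-1$, then $p_0$ lies at distance $2m-2$ from $v$, whereas the remaining $s$ points of $L$ lie at distance $2m$ and are hence opposite to $v$; in this case $L\cap \Points_{\le 2m-2}(v)=\{p_0\}$ has size $1$. If instead $\delta(v,L)\le 2m-3$, then even the farthest points of $L$ lie at distance $\delta(v,L)+1\le 2m-2$ from $v$, so all $s+1$ points of $L$ belong to $\Points_{\le 2m-2}(v)$, giving intersection size $s+1$. These two cases exhaust the possibilities for $\delta(v,L)$ and yield the claimed dichotomy.

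I expect the only genuinely delicate point to be the parity-plus-uniqueness step that fixes $\delta(v,p)$ for points $p\neq p_0$ on $L$; everything else is bookkeeping with the admissible distance values. In particular, no direct girth or cycle-counting argument is needed here, since the required uniqueness is already packaged in Corollary \ref{uniqueClosest}.
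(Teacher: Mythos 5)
Your proof is correct and takes essentially the same approach as the paper: both rest on Corollary \ref{uniqueClosest} and a case split on the distance from $v$ to the unique closest point of $L$ (your case $\delta(v,L)=2m-1$ versus $\delta(v,L)\le 2m-3$ is exactly the paper's $\delta(v,w)=2m-2$ versus $\delta(v,w)<2m-2$). Your parity-plus-triangle-inequality bookkeeping simply makes explicit the steps the paper's proof treats as immediate.
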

\begin{proof}
	By Corollary \ref{uniqueClosest}, \(L\) contains a unique point \(w\) closest to \(v\) and we know that \(\delta(v,w) \in \{0,2,\dots, 2m-2\}\).
	If \(\delta(v,w) < 2m-2\) then clearly all points on \(L\) are in \(\Points_{\leq 2m-2}(v)\). If \(\delta(v,w) = 2m-2\) then all points on \(L\) different from $w$ are at distance \(2m\) from \(v\) and therefore not in \(\Points_{\leq 2m-2}(v)\).
\end{proof}

The following lemma derives a lower bound on the size of a blocking set with respect to lines in $\Gamma$.
\begin{lemma}\label{smallC1} Let \(B\) be a set of points of \(\Gamma\) such that
	\[\forall L\in \Lines: |\Points_{1}(L)\cap B|\ge 1,\]
	then \(|B|\ge \frac{s^m t^m-1}{st-1}\).
\end{lemma}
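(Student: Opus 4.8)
The quantity on the right-hand side is the geometric sum \(\frac{s^m t^m - 1}{st-1} = 1 + st + (st)^2 + \cdots + (st)^{m-1}\), and the crucial point is that this is exactly the number of lines of \(\Gamma\) divided by \(t+1\): a weak generalised \(2m\)-gon of order \((s,t)\) has \((1+t)\frac{s^m t^m - 1}{st-1}\) lines. The plan is therefore a single double-counting argument once this count is in place. The hypothesis on \(B\) is precisely that \(B\) is a blocking set with respect to lines, so we are in the classical situation where the trivial incidence count already delivers the bound.

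Concretely, I would count the set of flags \(\{(b,L)\in B\times\Lines : b \mathrel{\I} L\}\) in two ways. Summing over points, each point of \(\Gamma\) is incident with exactly \(t+1\) lines, so this number equals \(|B|(t+1)\). Summing over lines, the blocking hypothesis \(|\Points_1(L)\cap B|\ge 1\) guarantees that every line contributes at least one flag, so the number of flags is at least the total number of lines. Combining the two counts gives
\[|B|\,(t+1) \;=\; \sum_{L\in\Lines}|\Points_1(L)\cap B| \;\ge\; |\Lines| \;=\; (1+t)\,\frac{s^m t^m - 1}{st-1}.\]
Dividing by \(t+1\) yields \(|B|\ge \frac{s^m t^m - 1}{st-1}\), as claimed.

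The only genuine content is establishing the number of lines, which is where I expect the (minor) technical work to lie. If I did not wish to quote it as a standard fact, I would fix a point \(p\) and count elements by their distance from \(p\): every line lies at odd distance at most \(2m-1\) from \(p\) (since the incidence graph has diameter \(2m\)), and the girth condition of Corollary \ref{incidenceGraph} together with the order \((s,t)\) forces the recursion \(|\Points_{2i}(p)| = (t+1)s\,(st)^{i-1}\) and \(|\Lines_{2i+1}(p)| = (t+1)(st)^{i}\) for \(0\le i\le m-1\), the girth ensuring that no element is reached by two distinct shortest paths. Summing \(|\Lines_1(p)| + |\Lines_3(p)| + \cdots + |\Lines_{2m-1}(p)|\) over this exhaustive partition of \(\Lines\) gives the geometric series \((1+t)\sum_{i=0}^{m-1}(st)^i = (1+t)\frac{s^m t^m - 1}{st-1}\). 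Everything else is the elementary double count above, so no essential obstacle remains beyond this bookkeeping.
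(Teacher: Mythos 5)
Your proof is correct and is essentially the paper's own argument: the same double count of the flags $\{(b,L)\in B\times\Lines : b\,\I\,L\}$, using that each point lies on $t+1$ lines and that $|\Lines|=(1+t)\frac{s^mt^m-1}{st-1}$ (which the paper simply cites from the literature, whereas you also sketch the standard distance-partition derivation). No gap; nothing further needed.
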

\begin{proof}
	We double count the following set:
	\[X := \{(v,L)\mid v\in B, L\in \Lines, v\I L\}.\]
	Each point in $B$ lies on \(t+1\) lines so we find \(|X|=|B|(t+1)\).
	Because there are \(|\Lines| = (1+t)\frac{s^m t^m-1}{st-1}\) lines (see for instance \cite{Hendrik}), each of which needs to contain at least one point of  \(B\) we find \(|X|\ge(1+t)\frac{s^m t^m-1}{st-1} \).
	Combining these results gives us \(|B|\ge \frac{s^m t^m-1}{st-1} \).
\end{proof}

We have seen that lines are $\mathcal{X}$-blocking sets where $\mathcal{X}=\{\mathcal{P}_{\leq 2m-2}(v)\mid v \in \Points\}$; we now show that $\mathcal{X}$-blocking sets have at least the size of a line.

\begin{lemma}\label{smallC}
	Let \(C\) be an $\mathcal{X}-$blocking set,
	then \(|C|\ge s+1\).
\end{lemma}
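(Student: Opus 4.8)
The plan is to argue by contradiction: I assume $|C| < s+1$ and produce a point $v$ for which $\Points_{\leq 2m-2}(v)$ avoids $C$, contradicting the assumption that $C$ is an $\mathcal{X}$-blocking set. The entire argument is engineered to feed into Lemma \ref{starLemma}(ii), which is the mechanism converting the statement ``the point set of some line misses $C$'' into ``some set $\Points_{\leq 2m-2}(v)$ misses $C$''.

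First I would invoke Lemma \ref{starLemma}(ii) with the maximal admissible value $d = m-1$ (legitimate since $m \geq 2$). This yields, for every line $L$, a point $v \in \Points_{\leq 2m-3}(L)$ with
\[ C \cap \Points_{\leq 2m-2}(v) = C \cap \Points_1(L). \]
The right-hand side is exactly the intersection of $C$ with the point set of $L$. Consequently, if there exists even one line $L$ whose point set is disjoint from $C$, the associated point $v$ satisfies $C \cap \Points_{\leq 2m-2}(v) = \emptyset$, and the $\mathcal{X}$-blocking hypothesis fails — the contradiction I am after.

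It then remains to exclude the complementary situation in which every line meets $C$, that is, $C$ is a blocking set with respect to lines. Here I would apply Lemma \ref{smallC1}, which bounds any such set below by $\frac{s^m t^m - 1}{st - 1}$. Rewriting this quantity as the geometric sum $1 + st + \cdots + (st)^{m-1}$ and using $m \geq 2$ together with $t \geq 1$, it is at least $1 + st \geq 1 + s = s+1$, once more contradicting $|C| < s+1$. The two cases together force $|C| \geq s+1$.

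The step carrying the real weight is the passage through Lemma \ref{starLemma}(ii): the clean dichotomy ``either some line misses $C$, or $C$ blocks every line'' becomes decisive only because that lemma propagates an unblocked line inward to an unblocked set $\Points_{\leq 2m-2}(v)$. The remaining ingredients — the line-blocking bound and the elementary inequality $\frac{s^m t^m - 1}{st - 1} \geq s+1$ for $m \geq 2$ — are routine, though I would take care with the degenerate order $s = t = 1$, where the bound is cleanest read from the geometric-sum form as $m \geq 2 = s+1$ rather than from the closed fraction.
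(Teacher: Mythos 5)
Your proof is correct and follows essentially the same route as the paper's: Lemma \ref{starLemma}(ii) with $d=m-1$ to reduce the $\mathcal{X}$-blocking hypothesis to line-blocking, then Lemma \ref{smallC1} with the geometric-sum inequality $\frac{s^mt^m-1}{st-1}\geq s+1$. Your case dichotomy is just a rephrasing of the paper's direct chain (the blocking hypothesis immediately rules out your first case), and your remark about reading the bound as $1+st+\cdots+(st)^{m-1}$ when $s=t=1$ is a sensible minor refinement the paper leaves implicit.
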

\begin{proof}
	Assume to the contrary that \(|C|<s+1\). 
	Lemma \ref{starLemma} (ii) with \(d=m-1\) tells us that 
	\[\forall L \in \Lines: \exists x \in \Points_{\le 2m-3}(L):   C\cap \Points_{\le2m-2}(x)=C\cap \Points_{1}(L) .\]
	Since $\forall x$, $C\cap \Points_{\le 2m-2}(x)\neq\emptyset$ , we deduce that
	\[\forall L \in \Lines: C\cap \Points_{1}(L)\neq \emptyset.\]
	Using Lemma \ref{smallC1}, we find that $|C|\geq \frac{s^mt^m-1}{st-1}\geq s+1$, a contradiction.
\end{proof}

\subsection{\texorpdfstring{Characterising $\mathcal{X}$-blocking sets}{Characterising X-blocking sets}}\label{subsection:characterisation}
The goal of this subsection is to prove Theorem \ref{distanceTraces}, which characterises $\mathcal{X}$-blocking sets of minimum size, where, as before, $\mathcal{X}=\{\mathcal{P}_{\leq 2m-2}(v)\mid v \in \Points\}$. We will prove this using a series of lemmas.
\begin{lemma}\label{exists0} Assume that $s\leq t$. Let \(C\) be a set of points of size \(s+1\) and let \(x\) be an element of \(\Gamma\). Let \(d\in\{1,\dots,2m-2\}\).
	If \(\Points_{\le d-1}(x)\cap C = \emptyset\) and 
	\[\exists y \in \Gamma_1(x): |\Points_{\le d}(y)\cap C| \ge 2\] then
	\[\exists z \in \Gamma_1(x): \Points_{\le d}(z)\cap C = \emptyset. \]
\end{lemma}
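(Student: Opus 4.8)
The plan is to exploit the pairwise–intersection formula of Corollary \ref{intersections2}(i) to turn the sets $\Points_{\le d}(y)\cap C$, as $y$ ranges over $\Gamma_1(x)$, into pairwise disjoint subsets of $C$, and then to run a short counting argument against $|C|=s+1$.

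First I would set $N:=|\Gamma_1(x)|$ and record that $N=t+1$ when $x$ is a point and $N=s+1$ when $x$ is a line; in either case the assumption $s\le t$ gives $N\ge s+1$. This is the only place where the hypothesis $s\le t$ is used, and it is exactly what is needed to handle the case in which $x$ is a point, where $\Gamma_1(x)$ consists of the $t+1$ lines through $x$.

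Next, for each $y\in\Gamma_1(x)$ I would put $A_y:=\Points_{\le d}(y)\cap C$. Since $d\in\{1,\dots,2m-2\}$, Corollary \ref{intersections2}(i) applies to any two distinct $y_1,y_2\in\Gamma_1(x)$ and yields $\Points_{\le d}(y_1)\cap\Points_{\le d}(y_2)=\Points_{\le d-1}(x)$. Intersecting with $C$ and invoking the hypothesis $\Points_{\le d-1}(x)\cap C=\emptyset$ shows $A_{y_1}\cap A_{y_2}=\emptyset$; that is, the family $\{A_y:y\in\Gamma_1(x)\}$ consists of pairwise disjoint subsets of $C$. I would then argue by contradiction: assume no $z\in\Gamma_1(x)$ satisfies $\Points_{\le d}(z)\cap C=\emptyset$, so every $A_y$ is nonempty, and let $y_0\in\Gamma_1(x)$ be an element with $|A_{y_0}|\ge 2$, which exists by hypothesis. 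Since the $A_y$ are disjoint subsets of $C$,
\[
s+1=|C|\ \ge\ \sum_{y\in\Gamma_1(x)}|A_y|\ \ge\ |A_{y_0}|+(N-1)\ \ge\ N+1\ \ge\ (s+1)+1,
\]
a contradiction. Hence some $z\in\Gamma_1(x)$ has $\Points_{\le d}(z)\cap C=\emptyset$, as required.

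There is no serious analytic difficulty here; the whole content lies in (a) spotting that the disjointness of the $A_y$ comes for free from Corollary \ref{intersections2}(i) together with the emptiness hypothesis $\Points_{\le d-1}(x)\cap C=\emptyset$, and (b) treating the two cases ``$x$ a point'' and ``$x$ a line'' uniformly through the single quantity $N=|\Gamma_1(x)|$, while being careful that the bound $N\ge s+1$ in the point case is precisely what the assumption $s\le t$ provides.
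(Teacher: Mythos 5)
Your proof is correct and takes essentially the same approach as the paper: both use Corollary \ref{intersections2}(i) together with the hypothesis \(\Points_{\le d-1}(x)\cap C=\emptyset\) to make the sets \(\Points_{\le d}(y)\cap C\) pairwise disjoint for \(y\in\Gamma_1(x)\), then derive the contradiction \(|C|\ge s+2\) by counting, invoking \(s\le t\) precisely to handle the case where \(x\) is a point and \(|\Gamma_1(x)|=t+1\). Your notation \(N\) and \(A_y\) is merely a cosmetic repackaging of the paper's bound \(|C|\ge |\Gamma_1(x)\setminus\{y\}|+2\ge s+2\).
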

\begin{proof}

	Let \(z_1\) and \(z_2\) be two different elements in \( \Gamma_1(x)\).
	By Corollary  \ref{gammaProperties} and \ref{intersections2}(1),
	\[\Points_{\le d}(z_1) \cap \Points_{\le d}(z_2)\cap C = \Points_{\le d-1}(x) \cap C= \emptyset.\] 
	Assume to the contrary that 
	\[\forall z \in \Gamma_1(x): |\Points_{\le d}(z)\cap C| > 0, \]
then we find that \(|C|\ge |\Gamma_1(x)\setminus\{y\}| + 2 \).

There are either $s$ (when $x$ is a line) or $t$ (when $x$ is a point) elements in $\Gamma_1(x),$ different from $y$. 
    Hence, since  \(s\le t\) there are at least \(s\) elements in \(\Gamma_1(x)\) different from \(y\). 
	We find \(|C|\ge |\Gamma_1(x)\setminus\{y\}| + 2 \ge s+2\), a contradiction since $|C|=s+1$.
\end{proof}

Note that in the following lemma, we use \(\Points_d(x)\) and \(\Points_{d-1}(y)\) and not \(\Points_{\le d}(x)\) and \(\Points_{\le d-1}(y)\).
\begin{lemma}\label{traceLines}
	
	Assume that \(s\le t\). Let $C$ be an $\mathcal{X}-$blocking set.
	Let \(x\) be an element of \(\Gamma\) such that for some \(d\in\{1,\dots,2m-2\}\):
	\begin{itemize}
		\item \(\Points_d(x)\) contains at least two points of \(C\).
		\item \(\Points_{\le d-1}(x)\) contains no points of \(C\).
	\end{itemize}
Then, $C$ is a $\mathcal{Y}-$blocking set where $\mathcal{Y}=\{\Points_{d-1}(y)\mid y\in \Gamma_1(x)\}.$

\end{lemma}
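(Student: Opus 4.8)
The plan is to first translate the desired $\mathcal{Y}$-blocking property into a transparent statement about how the points of $C$ at distance $d$ from $x$ are distributed among the neighbours of $x$, and then to establish that statement by contradiction against the $\mathcal{X}$-blocking hypothesis. For the translation, fix $y\in\Gamma_1(x)$ and examine $\Points_{d-1}(y)\cap C$. If $p\in C$ has $\delta(y,p)=d-1$, then the triangle inequality together with the hypothesis $\Points_{\le d-1}(x)\cap C=\emptyset$ (which forces $\delta(x,p)\ge d$) gives $\delta(x,p)=d$, and the uniqueness of geodesics between non-opposite elements, a consequence of the girth being $4m$ as exploited in the proof of Lemma \ref{intersections1}, shows that $y$ is the unique neighbour of $x$ lying on the geodesic from $x$ to $p$. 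Moreover, since all points of $C$ are at distance $\ge d$ from $x$, a parity check gives $\Points_{\le d}(y)\cap C=\Points_{d-1}(y)\cap C$, so this set is exactly the collection of points of $C\cap\Points_d(x)$ whose geodesic to $x$ passes through $y$. Consequently, proving that $C$ is $\mathcal{Y}$-blocking is equivalent to showing that \emph{every} neighbour of $x$ serves as such a gateway for at least one point of $C\cap\Points_d(x)$; this is precisely where the remark preceding the lemma, insisting on exact rather than bounded distances, is used.

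Next I would argue by contradiction. Suppose some neighbour $z$ of $x$ is a gateway for no point of $C\cap\Points_d(x)$, equivalently $\Points_{\le d}(z)\cap C=\emptyset$. Choose two points $p_1,p_2\in C\cap\Points_d(x)$, which exist by hypothesis. Their gateways differ from $z$, so $\delta(z,p_i)\in\{d-1,d+1\}$ must equal $d+1$, and the unique geodesic from $z$ to $p_i$ (of length $d+1\le 2m-1$) runs back through $x$. Hence $p_1,p_2$ lie outside the ``cone'' $\mathcal{C}$ consisting of those elements whose geodesic from $z$ avoids $x$. Using the minimality $|C|=s+1$, it follows that $\mathcal{C}$ contains at most $|C|-2=s-1$ points of $C$.

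The heart of the argument, and the step I expect to be hardest, is then to re-run the iterative dodging construction of Lemma \ref{starLemma} (as applied in Lemma \ref{smallC}), but rooted at $z$ and confined to $\mathcal{C}$. The crucial gain is that only $\le s-1$ points of $C$ are relevant inside $\mathcal{C}$: this is strictly smaller than the number of outward directions at every branching, namely the $s$ points of a line other than $x$ when $x$ is a point, or the $t\ge s$ lines through a point other than $x$ when $x$ is a line (here the hypothesis $s\le t$ is consumed), and thereafter the $s+1$ points of a line or $t+1$ lines through a point. Thus the pigeonhole step that in Lemma \ref{starLemma} required $|C|<s+1$ now succeeds at each stage, letting us push a path from $z$ outward to an element at distance $2m-3$ and finally to a point $v$ with $\Points_{\le 2m-2}(v)\cap C=\emptyset$. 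Such a $v$ is opposite to every point of $C$, contradicting the assumption that $C$ is $\mathcal{X}$-blocking. This contradiction shows that no neighbour of $x$ can fail to be a gateway, which by the first paragraph is exactly the assertion that $C$ is $\mathcal{Y}$-blocking.

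The main obstacle I anticipate is making this cone argument fully rigorous. One must verify not only that the reduced count $s-1$ stays below the branching degree throughout the induction (and not merely at the first step out of $z$), but also that \emph{all} points of $C$ lying on the $x$-side of $z$, not just $p_1$ and $p_2$, end up opposite to the deep point $v$, so that $v$ genuinely violates $\mathcal{X}$-blocking. The bookkeeping mirrors that of Lemma \ref{starLemma}, but reconciling the cone restriction (which controls the count) with the global ``opposite to all of $C$'' conclusion (which is what contradicts $\mathcal{X}$-blocking) is the delicate point; it is there that the three hypotheses $|C|=s+1$, $s\le t$, and $|\,\Points_d(x)\cap C\,|\ge 2$ are simultaneously needed.
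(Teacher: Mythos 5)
Your opening reduction is sound and your overall skeleton (suppose $\Points_{d-1}(z)\cap C=\emptyset$ for some $z\in\Gamma_1(x)$, note this is equivalent by parity to $\Points_{\le d}(z)\cap C=\emptyset$, then push an empty ball outward step by step until a point $v$ has $\Points_{\le 2m-2}(v)\cap C=\emptyset$, contradicting $\mathcal{X}$-blocking) is exactly the paper's. But the mechanism you propose for the inductive step --- counting only the $\le s-1$ points of $C$ inside the cone $\mathcal{C}$ and pigeonholing over the forward directions --- has a genuine gap, and it is the one you yourself flag without closing. The pigeonhole over forward neighbours only produces a ball free of \emph{cone} points, whereas the induction needs a ball free of \emph{all} of $C$. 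Your implicit hope that a ball $\Points_{\le d+k}(z_k)$ around a cone element meets only cone points is true only in the tree-like regime: if $p\in C$ lies behind $x$ with $\delta(z,p)=D$, the girth-$4m$ argument forces $\delta(z_k,p)=k+D>d+k$ only while $k+D\le 2m$; once $k+D>2m$ (which happens for small $d$, e.g. $d=1$ in a hexagon with $k$ up to $2m-2-d$ and $p$ opposite $z$), the distance $\delta(z_k,p)$ is merely capped at $2m$ and such a point can land inside the chosen forward ball, killing the induction. Worse, for points of $C$ opposite $z$ the cone membership is not even well defined (geodesics are not unique), so the count $\le s-1$ itself is shaky, and you have no bound on how many such stray points intrude at a given stage.

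The paper closes precisely this hole by running the pigeonhole globally rather than cone-confined (Lemma \ref{exists0}): at each step it counts over \emph{all} $\ge s+1$ neighbours of the current element $z_k$ against the full bound $|C|\le s+1$, using the anchor observation that the two points of $\Points_d(x)\cap C$ sit permanently in the \emph{backward} neighbour's ball --- formally $\Points_d(x)\subseteq\Points_{\le d+1}(y)\subseteq\Points_{\le k+1}(y')$, where $y'$ is the neighbour of $z_k$ towards $y$. Since balls around distinct neighbours meet only inside the (empty) ball around $z_k$ (Corollary \ref{intersections2}), if every neighbour's ball met $C$ one would get $|C|\ge s+2$; hence some forward ball is empty of all of $C$, with no assumption about where the remaining points of $C$ live. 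So your anchors $p_1,p_2$ should be kept \emph{inside} one ball of the pigeonhole rather than excised from the count via a cone; your $(s+1)-2=s-1<s$ arithmetic is the same, but only the paper's formulation makes the conclusion ``empty of all of $C$'' automatic. Two smaller points: both your argument and the paper's use $|C|=s+1$, which is only implicit in the lemma's statement; and your endgame needs parity care --- the final element must be a point, since a line $L^*$ with $\Points_{\le 2m-2}(L^*)\cap C=\emptyset$ does not contradict $\mathcal{X}$-blocking (all of $C$ could sit at distance $2m-1$ from $L^*$ with each point of $L^*$ claimed as some closest point) --- which the paper's one-step-per-unit-radius induction gets right automatically, but your Lemma \ref{starLemma}-style two-step hops would need to track explicitly.
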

\begin{proof}
	
	Assume to the contrary that there is an element \(y\) in \(\Gamma_{1}(x)\) with 
	\[\Points_{d-1}(y)\cap C = \emptyset.\]
	We prove the following claim by induction on \(d'\):
	\[\forall d' \in \{d-1,\dots,2m-2\}:\exists z \in \Gamma_{\le d'-d +1}(y): \Points_{\le d'}(z)\cap C = \emptyset.\]
	The base case \(d' = d-1\) follows by considering the point \(y\): since \(\Gamma_{\le d-2}(y)\) is a subset of \(\Points_{\le d-1}(x)\), the statement follows from \(\Points_{\le d-1}(x)\cap C = \emptyset\) and \(\Points_{d-1}(y)\cap C = \emptyset\).
	
	Now assume that the statement holds for \(d' = k\) with \(d-1\le k<2m-2\). 
	We find an element \(z_k\) in \(\Gamma_{\le k-d+1}(y)\) such that \[\Points_{\le k}(z_k)\cap C = \emptyset.\] 
	
	Since \(z_k\in \Gamma_{\le k-d+1}(y)\) we also get \(y \in \Gamma_{\le k-d+1}(z_k)\).
	Hence, there is an element \(y'\) in \(\Gamma_{1}(z_k)\) such that \(y \in \Gamma_{\le k-d}(y')\). This implies that \[\Points_d(x) \subseteq \Points_{\le d+1}(y) \subseteq \Points_{\le k+1}(y').\]
	Therefore, \(\Points_{\le k+1}(y')\) contains at least two points of \(C\).
	We are now ready to use Lemma \ref{exists0} with \(x=z_k\) and \(y=y'\). This implies the existence of an element \(z_{k+1}\) in \(\Gamma_1(z_k)\subseteq \Gamma_{\le k-d+2}(y)\) such that \[\Points_{\le k+1}(z_{k+1})\cap C = \emptyset.\]
	This proves our claim.
	By setting \(d'=2m-2\), we now find a contradiction with the assumption that $C$ is an $\mathcal{X}-$blocking set.
\end{proof}

\begin{lemma}\label{pointsAtDist} Assume that \(s\le t\). 
	Let \(C\) be a set of points of size \(s+1\). 
	Let \(x\) be an element of \(\Gamma\) such that for some \(d\in\{1,\dots,2m-2\}\) the following hold:
	\begin{itemize}
		\item \(\Points_{\le d-2}(x)\) contains no points of \(C\).
		\item $C$ is a $\mathcal{Y}$- blocking set where $\mathcal{Y}=\{\Points_{d-1}(y)\mid y\in \Gamma_1(x)\}.$
        
	\end{itemize}
	Then, \(C\subseteq \Points_d(x)\) and if \(x\) is a point, then \(s=t\).
\end{lemma}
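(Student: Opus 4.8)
The plan is to prove that every point of $C$ lies at distance exactly $d$ from $x$, and to read off $s=t$ (when $x$ is a point) from a counting argument. First I would record the parity constraints hidden in the hypotheses. Since $C$ is a $\mathcal{Y}$-blocking set, each $\Points_{d-1}(y)$ with $y\in\Gamma_1(x)$ must be nonempty; by Corollary \ref{gammaProperties} this forces $d-1$ to be even when $x$ is a line (so each $y$ is a point) and odd when $x$ is a point (so each $y$ is a line). In both cases the parity of $d$ agrees with the parity of the distance from $x$ to any point, so $\Points_{\le d-2}(x)\cap C=\emptyset$ together with parity shows that every point of $C$ is at distance $\ge d$ from $x$ (the value $d-1$ being excluded by parity). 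The degenerate case $d=1$ I would settle directly: blocking forces every element of $\Gamma_1(x)$ to be a point lying in $C$, so $x$ is a line, $\Gamma_1(x)=\Points_1(x)\subseteq C$, and cardinality gives $C=\Points_1(x)=\Points_d(x)$. From then on I assume $d\ge 2$.

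Next I would isolate two facts about the sets $A_y:=\Points_{d-1}(y)\cap C$ for $y\in\Gamma_1(x)$. For their \emph{pairwise disjointness}, suppose $z\in A_{y_1}\cap A_{y_2}$ with $y_1\ne y_2$, so $\delta(y_1,z)=\delta(y_2,z)=d-1$. Applying Corollary \ref{intersections2}(1) with its parameter set equal to $d-1$ (which lies in $\{1,\dots,2m-2\}$ since $2\le d\le 2m-2$) and restricting to points yields $\Points_{\le d-2}(x)=\Points_{\le d-1}(y_1)\cap\Points_{\le d-1}(y_2)$, so $z\in\Points_{\le d-2}(x)\cap C=\emptyset$, a contradiction. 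For the second fact, if $z\in A_y$ then the triangle inequality gives $\delta(x,z)\le\delta(x,y)+\delta(y,z)=1+(d-1)=d$; combined with the lower bound $\delta(x,z)\ge d$ from the first paragraph, every point lying in some $A_y$ is at distance exactly $d$ from $x$.

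Finally I would run the count. The $A_y$ are pairwise disjoint subsets of $C$, each nonempty by the $\mathcal{Y}$-blocking property, so $|\Gamma_1(x)|\le\bigl|\bigcup_{y}A_y\bigr|\le|C|=s+1$. If $x$ is a line then $|\Gamma_1(x)|=s+1$, forcing $\bigcup_{y}A_y=C$; as every point of this union is at distance $d$ from $x$, we obtain $C\subseteq\Points_d(x)$. If $x$ is a point then $|\Gamma_1(x)|=t+1$, so $t+1\le s+1$, that is $t\le s$, which with the standing assumption $s\le t$ gives $s=t$; then $|\Gamma_1(x)|=s+1$ again forces $\bigcup_{y}A_y=C\subseteq\Points_d(x)$.

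The main obstacle is the disjointness of the $A_y$: this is the only step where the girth/diameter structure of the polygon genuinely enters (through Corollary \ref{intersections2}), and it is precisely what converts the qualitative $\mathcal{Y}$-blocking hypothesis into the quantitative bound $|\Gamma_1(x)|\le|C|$. Once disjointness is established, the parity bookkeeping and the cardinality count are routine. I would, however, watch the boundary values of $d$ carefully, both the special case $d=1$ and the verification that $d-1$ lies in the admissible range $\{1,\dots,2m-2\}$ required by Corollary \ref{intersections2}, since these are exactly where an off-by-one slip could occur.
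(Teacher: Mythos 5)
Your proof is correct and follows essentially the same route as the paper's: pairwise disjointness of the sets $\Points_{d-1}(y)\cap C$ via Corollary \ref{intersections2}(1) applied with parameter $d-1$, then the count $|\Gamma_1(x)|\le|C|=s+1$ to force $s=t$ when $x$ is a point and to conclude $C\subseteq\Points_d(x)$. You additionally make explicit two points the paper leaves implicit --- the parity argument excluding distance $d-1$ (the paper jumps from $C\subseteq\Points_{\le d}(x)$ and $\Points_{\le d-2}(x)\cap C=\emptyset$ straight to $C\subseteq\Points_d(x)$) and the boundary case $d=1$, where the paper's appeal to Corollary \ref{intersections2}(1) would use the parameter value $0$ outside its stated range --- which is careful bookkeeping, not a different approach.
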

\begin{proof}
Corollary \ref{intersections2}(1) shows that for any two elements \(y_1\) and \(y_2\) in \(\Gamma_{1}(x)\) \[ \Gamma_{\le d-1}(y_1)\cap\Gamma_{\le d-1}(y_2)=\Gamma_{\le d - 2}(x).\]
This implies that 
\[\Points_{d-1}(y_1)\cap\Points_{d-1}(y_2) \subseteq \Points_{\leq d-2}(x).\]
Since $\mathcal{P}_{\leq d-2}\cap C=\emptyset$ we see that for every $y\in \Gamma_1(x),$ the set \(\Points_{d-1}(y)\setminus\Points_{d-2}(x)\) contains a different point of \(C\), and the points obtained in this way are mutually distinct. We find that $s+1=|C|\geq |\Gamma_1(x)|$. If $x$ is a point, $|\Gamma_1(x)|=t+1$, and by our assumption, $t\geq s$, so it follows that in this case $s=t$.
Furthermore, it follows that in both the case where $x$ a point or $x$ is a line, each of the sets  \(\Points_{d-1}(y)\setminus\Points_{d-2}(x)\) contains exactly one point of \(C\), and $C$ does not contain any further points. It follows that $C\subseteq \Points_{\leq d}(x)$, and since $\Points_{\leq d-2}(x)\cap C=\emptyset$, $C\subseteq \Points_d(x)$.
\end{proof}

\begin{definition}
	A {\em distance $d$-trace} \(T_{d,x,y}\), where \(d\in \{1,\dots,m\}\), is a non-empty set of points such that there exist opposite elements \(x\) and \(y\) in \(\Points\cup\Lines\) with 
	\[T_{d,x,y} = \Points_{d}(x)\cap\Points_{2m-d}(y).\]
\end{definition}
If we do not want to specify the distance $d$, we also call a distance $d$-trace a {\em distance trace}.
\begin{remark} Note that $x$ and $y$ play a different role in the definition so unless $d=m$, the distance $d$-trace determined by $x$ and $y$ is different from the distance $d$-trace determined by $y$ and $x$.
\end{remark}

\begin{remark} The point set of a line is an example of a distance trace: take $d=1$, $x=L$ and $M$ any line opposite to $L$, then $T_{1,L,M}=\Points_1(L)\cap \Points_{2m-1}(M)=\Points_1(L).$
\end{remark}
\begin{theorem}\label{distanceTraces}Assume that \(s\le t\).
Let $C$ be an $\mathcal{X}$-blocking set. Then $|C|\geq s+1$. If equality holds then $C$ is a distance $d$-trace $T_{d,x,y}$ for some opposite elements $x,y$ of $\Gamma$ and some $d\in \{1,\ldots,m\}$. 
 Furthermore, if equality holds and $s<t$, then $d$ is odd.
\end{theorem}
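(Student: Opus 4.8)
The plan is to first reduce to the equality case via Lemma~\ref{smallC}, and then to extract the element $x$ and the distance $d$ directly from the internal metric of $C$, so that Lemmas~\ref{traceLines} and~\ref{pointsAtDist} can be applied. Concretely, since $|C|=s+1\ge 2$, I would set $2d:=\min\{\delta(c,c')\mid c,c'\in C,\ c\neq c'\}$; as any two points lie at even distance at most $2m$, this gives $d\in\{1,\dots,m\}$. Fixing a pair $c_1,c_2\in C$ with $\delta(c_1,c_2)=2d$ and letting $x$ be the middle element of a geodesic joining them (unique if $d<m$, chosen arbitrarily if $d=m$), I would verify the two hypotheses needed downstream: first $c_1,c_2\in\Points_d(x)$, so $|\Points_d(x)\cap C|\ge 2$; and second $\Points_{\le d-1}(x)\cap C=\emptyset$. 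The latter is a girth/triangle-inequality argument: any $c_3\in C$ with $\delta(x,c_3)\le d-1$ would satisfy $\delta(c_1,c_3)\le 2d-1$, hence $\le 2d-2<2d$ by parity, contradicting the minimality of $2d$ (and $c_3=c_1$ is excluded since $\delta(x,c_1)=d$).

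With these two facts in hand, I would invoke Lemma~\ref{traceLines} (legitimate since $d\le m\le 2m-2$) to conclude that $C$ is a $\mathcal{Y}$-blocking set for $\mathcal{Y}=\{\Points_{d-1}(y)\mid y\in\Gamma_1(x)\}$, and then Lemma~\ref{pointsAtDist} (whose hypothesis $\Points_{\le d-2}(x)\cap C=\emptyset$ is immediate) to obtain $C\subseteq\Points_d(x)$ together with the dichotomy that $x$ being a point forces $s=t$. I would also read off the rigid structure recorded inside the proof of Lemma~\ref{pointsAtDist}, which I expect to need in the final step: $|\Gamma_1(x)|=s+1=|C|$ and each neighbour of $x$ carries exactly one point of $C$ (at distance $d-1$), so that the points of $C$ are in fact pairwise at distance $2d$. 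The ``furthermore'' clause then drops out at once: if $s<t$, then $x$ cannot be a point, so $x$ is a line, and since $\emptyset\neq C\subseteq\Points_d(x)$, Corollary~\ref{gammaProperties}(ii) forces $d$ to be odd.

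The genuinely hard step is to upgrade the containment $C\subseteq\Points_d(x)$ to the equality $C=\Points_d(x)\cap\Points_{2m-d}(y)$ for a suitable $y$ opposite $x$, i.e.\ to produce the second defining element of the trace. The trace $\Points_d(x)\cap\Points_{2m-d}(y)$ is exactly the set of distance-$d$ points on geodesics from $x$ to $y$; since two geodesics between opposite elements cannot share a first edge (this would create a cycle of length $<4m$, violating Corollary~\ref{incidenceGraph}), such a trace has at most one point per neighbour of $x$, so its maximal possible size is precisely $s+1=|C|$. It therefore suffices to find $y$ opposite $x$ whose trace \emph{contains} $C$, and the natural candidate is obtained by extending the geodesic from $x$ through a single point $c_1\in C$ out to distance $2m$. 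The obstacle is that a priori this $y$ only captures $c_1$, and in a non-regular polygon there is no formal reason one opposite element should simultaneously ``see'' all $s+1$ points of $C$ at the right distance --- this is exactly the phenomenon separating the regular from the general case. I expect the $\mathcal{X}$-blocking hypothesis, equivalently the statement that no point of $\Gamma$ is opposite every point of $C$ and which has not yet been fully used, to be precisely what forces such a $y$ to exist: assuming no opposite element captures all of $C$ and exploiting the one-point-per-neighbour rigidity, I would try to construct a point opposite every point of $C$, contradicting the blocking property. Making this construction explicit is where I anticipate the main difficulty to lie.
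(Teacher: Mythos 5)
Your first half tracks the paper's proof almost exactly: pick $z_1,z_2\in C$ at minimal mutual distance $2d$, observe that minimality plus parity forces $\Points_{\le d-1}(x)\cap C=\emptyset$ for a middle element $x$, then run Lemma~\ref{traceLines} followed by Lemma~\ref{pointsAtDist} to get $C\subseteq\Points_d(x)$, with the dichotomy ($x$ a point forces $s=t$) and Corollary~\ref{gammaProperties} yielding the ``furthermore'' clause exactly as you describe. Your reduction of the remaining work to producing a single element $y$ opposite $x$ with $C\subseteq\Points_{2m-d}(y)$ --- via the unique-geodesic count showing a trace has at most one point per element of $\Gamma_1(x)$, hence size at most $s+1$ --- is also sound, and in fact makes explicit a size count the paper leaves implicit when it concludes from the containment $C\subseteq T_{d,x,y}$. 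But at precisely the step you yourself flag, the proposal stops being a proof: the contradiction scheme you sketch (use the blocking hypothesis to build a point opposite every point of $C$) is never carried out, and extending a geodesic through one point $c_1$ does not control the other $s$ points. That is a genuine gap.

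The paper closes it with a device you overlooked, and it is much simpler than what you anticipate. Instead of extracting $x$ from a geodesic midpoint and hunting for $y$ afterwards, take an ordinary $2m$-gon through $z_1$ and $z_2$ --- guaranteed by axiom (GP2) --- and let $x$ and $y$ be antipodal elements of it chosen so that $z_1,z_2\in\Points_d(x)\cap\Points_{2m-d}(y)$ from the outset. Once the first application of Lemmas~\ref{traceLines} and~\ref{pointsAtDist} gives $C\subseteq\Points_d(x)$, oppositeness of $x$ and $y$ yields
\[
C\cap\Points_{\le 2m-d-1}(y)\subseteq\Points_{\le d}(x)\cap\Points_{\le 2m-d-1}(y)=\emptyset
\]
for free, and since $z_1,z_2\in\Points_{2m-d}(y)\cap C$, one simply applies the \emph{same} two lemmas a second time, centred at $y$ with distance parameter $2m-d$, to conclude $C\subseteq\Points_{2m-d}(y)$ and hence $C\subseteq T_{d,x,y}$. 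This reversed application is exactly why Lemmas~\ref{traceLines} and~\ref{pointsAtDist} are stated for distances ranging up to $2m-2$ rather than up to $m$; your parenthetical ``legitimate since $d\le m\le 2m-2$'' suggests you read the larger range as slack rather than as the mechanism (the boundary case $d=1$, where $2m-d=2m-1$ exceeds the range, is immediate anyway, since then $C=\Points_1(x)$ is the point row of a line and equals $T_{1,x,M}$ for any line $M$ opposite $x$). Your instinct that the $\mathcal{X}$-blocking hypothesis must be used again was correct, but it enters only through this second invocation of Lemma~\ref{traceLines}, not through any new construction of a point opposite all of $C$.
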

\begin{proof}
	Let \(z_1\neq z_2\) be two points of \(C\) such that $\delta(z_1,z_2)\leq 
    \delta(u,v)$ for all $u\neq v$ points of $C$.
    
    By the axioms of the generalised \(2m\)-gon $\Gamma$, there exists an ordinary \(2m\)-gon $\Gamma'$ containing \(z_1\) and \(z_2\). Since \(z_1\) and \(z_2\) are both points, the distance between them is even, and we can find opposite elements \(x\) and \(y\) in $\Gamma'$ such that:
	\begin{itemize}
		\item \(z_1,z_2 \in \Points_d(x)\)
		\item \(z_1,z_2 \in \Points_{2m-d}(y)\)
	\end{itemize}
	for \(d=\frac{\delta(z_1,z_2)}{2}\).
	
If \(\Points_{\le d-1}(x)\) would contain a point of \(C\), then \(z_1\) and \(z_2\) would not have been at the minimal distance. We can therefore use Lemma \ref{traceLines} and Lemma \ref{pointsAtDist} to conclude that all points of \(C\) are contained in \(\Points_{d}(x)\).
Since \(x\) and \(y\) are opposite, \(\Points_{\le d}(x)\cap \Points_{\le 2m-d-1}(y) = \emptyset\) and we get that \(\Points_{\le 2m-d-1}(y)\) can not contain any point of \(C\). Again, from lemma \ref{traceLines} and lemma \ref{pointsAtDist}, we can conclude that \(C\subseteq \Points_{2m-d}(y)\). 
We now have \(C\subseteq \Points_{d}(x)\cap\Points_{2m-d}(y)\) proving the statement.
\end{proof}

\begin{remark}\label{rem:traces} As seen before, when $d=1$, we consider opposite lines $x,y$, and $T_{1,x,y}$ is the set of points on the line $x$. 
In the case of weak generalised quadrangles, the distance $2$-trace is the set of points collinear to two non-collinear points. When the generalised quadrangle is $W(3,q)$, the distance $2$-trace is given by the points of a line of the ambient projective space (see also Remark \ref{remark:W3q}), and in the case of $\Q(4,q)$, this set is the point set of a conic in the ambient space. For the generalised hexagon $\SCHex(q)$ a similar situation arises. It is well-known that there is an embedding of \(\SCHex(q)\) in $\Q(6,q)$, which is a polar space embedded in $\PG(6,q)$.
This embedding has the property that for two points $v,w$ in $\SCHex(q)$, $\delta(v,w)=4$ if and only if $v$ and $w$ are collinear in $\Q(6,q)$ but not in $\SCHex(q)$.
This allows us to describe the distance $d$-traces in $\SCHex(q)$ geometrically: when $d=2$, we consider opposite points $x,y$. We find that $T_{2,x,y}$ is the set of $q+1$ points on a line of $\Q(6,q)$ that is not a line of $\SCHex(q)$. The set of points $\Points_{\leq 4}(v)$ is contained in the hyperplane $v^\perp$, where $\perp$ denotes the polarity corresponding to $\Q(6,q)$. The line $T_{2,x,y}$ and the hyperplane $v^\perp$ in $\PG(6,q)$ always intersect in at least a point, which is contained in $\Q(6,q)$, and hence, in $\SCHex(q)$. We deduce that in this case, $T_{2,x,y}$ forms an $\mathcal{X}$-blocking set.
When $d=3$, we consider again opposite lines $x,y$. The set of points at distance $3$ from both $x$ and $y$ is the set of $q+1$ points of $x^\perp\cap y^\perp$, where $\perp$ denotes the polarity corresponding to $\Q(6,q)$. These $q+1$ points form a conic in $\PG(6,q)$. 
\end{remark}

\begin{remark}
    The complement of the set $\Points_{\leq 2m-2}(v)$ is precisely the set of points at distance $2m$ of $v$, the set of points opposite to $v$. It follows that an $\mathcal{X}$-blocking set is a set of points $B$ such that there is no point opposite all of the points of $B$. 
    This point of view can be used to derive similar results for other point-line incidence geometries and to introduce the related concept of a  {\em geometric} line (see \cite{Kasikova}): this is a set of points such that each point is not opposite all or one point of the set. Note though that the definition of an $\mathcal{X}$-blocking set does not have fixed intersection sizes.
    \end{remark}
\subsection{Regularity and distance traces}
We observed in Theorem \ref{distanceTraces} that if $C$ is an $\mathcal{X}$-blocking set, then $C$ is a distance trace. The natural question arises whether every distance trace is an $\mathcal{X}$-blocking set. We have seen that for point sets of lines (which are examples of distance traces), this is the case. Similarly, in remark \ref{rem:traces} we gave an example in $\SCHex(q)$. We will now show more generally that if $\Gamma$ satisfies a certain regularity condition, the distance $2$-traces can form $\mathcal{X}$-blocking sets.

We need the definition of the perp geometry (see also \cite[1.9.2]{Hendrik}).

\begin{definition}
	Let \(x\) be a point of \(\Gamma\). The {\em perp-geometry} of a point \(x\) in \(\Gamma\) is the point-line geometry \((\Points',\Lines',\I')\) with
	\begin{itemize}
		\item \(\Points' = \Points_{ 2}(x)\),
		\item \(\Lines' = \Lines_{ 1}(x) \cup \{T_{2,x,y}\mid y\in\Points, y\text{ is opposite } x\}\),
		\item \(\I'\) defined naturally from \(\I\) and \(\in\).
	\end{itemize}
\end{definition}

A point $x$ is called {\em projective} if the perp-geometry at $x$ determines a projective plane. 

\begin{lemma}
    Let $x$ be a projective point of $\Gamma$, let $y$ be a point which is opposite to $x$ and let $T=T_{2,x,y}$ be the distance $2$-trace determined by $x$ and $y$. Then $T$ is an $\mathcal{X}$-blocking set.
\end{lemma}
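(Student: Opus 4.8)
The plan is to reformulate the blocking property pointwise and then to use the single defining feature of the perp-geometry we are handed, namely that $\pi$ is a projective plane and so any two of its lines meet. Unravelling the definition of $\mathcal{X}$, the claim ``$T$ is an $\mathcal{X}$-blocking set'' is exactly the statement that $T\cap \Points_{\le 2m-2}(v)\neq \emptyset$ for \emph{every} point $v$ of $\Gamma$. So I would fix an arbitrary point $v$ and try to produce a point of $T$ at distance at most $2m-2$ from $v$. Throughout I would keep in mind that $T=T_{2,x,y}\subseteq \Points_2(x)$ is itself one of the lines of $\pi$, and that, being a distance-$2$ trace, it does not contain $x$.

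The strategy for a fixed $v$ is to exhibit a second line $\ell_v$ of $\pi$ \emph{all} of whose points lie at distance at most $2m-2$ from $v$; then $T$ and $\ell_v$ are two lines of the projective plane $\pi$, hence meet in a point $p$, and $p\in T$ together with $p\in \ell_v$ gives the desired $p\in T\cap \Points_{\le 2m-2}(v)$. I would split according to $\delta(v,x)$, which is an even number in $\{0,2,\ldots,2m\}$. If $\delta(v,x)\le 2m-4$, the triangle inequality already gives $\delta(v,p)\le \delta(v,x)+2\le 2m-2$ for \emph{every} $p\in \Points_2(x)\supseteq T$, so any point of $T$ works and no appeal to $\pi$ is needed. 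If $\delta(v,x)=2m-2$, I would take $\ell_v$ to be the line $L_0$ through $x$ that is the penultimate element of a geodesic from $v$ to $x$; then $\delta(v,L_0)=2m-3$, so every point of $L_0$ lies at distance at most $\delta(v,L_0)+1=2m-2$ from $v$, and $L_0\in \Lines_1(x)$ is a line of $\pi$. Finally, if $\delta(v,x)=2m$, so that $v$ is opposite $x$, I would take $\ell_v=T_{2,x,v}$: by definition all of its points lie at distance $2m-2$ from $v$, and since $v$ is opposite $x$ this trace is a line of $\pi$ by the very definition of the perp-geometry (if it happens to coincide with $T$, the conclusion is immediate).

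The last two cases are where the real content sits, and they form the main obstacle: there $T$ may consist almost entirely of points opposite $v$, so a direct distance estimate on $T$ fails and one genuinely needs the planarity of $\pi$. The work is therefore to confirm that the chosen $\ell_v$ really is a line of the perp-geometry (a line through $x$ when $\delta(v,x)=2m-2$, a distance-$2$ trace when $\delta(v,x)=2m$) and that the guaranteed intersection point $p$ of $T$ and $\ell_v$ is a \emph{bona fide} point of $\Points_2(x)$; the latter is automatic because $p\in T$ and $x\notin T$, so $p\neq x$. Since $v$ was arbitrary, this establishes the blocking property. This is precisely the place where the hypothesis that $x$ is projective enters: it is only the projective-plane axiom ``any two lines meet'' that lets us convert the existence of a single $v$-close line $\ell_v$ into a $v$-close point of $T$ itself.
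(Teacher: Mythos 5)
Your proposal is correct and follows essentially the same route as the paper: in both arguments the opposite case is settled by intersecting $T_{2,x,y}$ with the second line $T_{2,x,v}$ of the perp-geometry, and the non-opposite case by intersecting it with a line of $\Lines_1(x)$ lying on a geodesic toward $v$ (the paper treats all $\delta(v,x)\le 2m-2$ uniformly via the second point of a shortest path, where you split off the trivial subcase $\delta(v,x)\le 2m-4$ by the triangle inequality). Your extra checks — that the intersection point lies in $\Points_2(x)$ and that $T_{2,x,v}$ may coincide with $T$ — are harmless refinements of the same argument.
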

\begin{proof}
    Let $\Gamma'=(\Points',\Lines',I')$ be the projective plane which is the perp-geometry determined by the projective point $x$. We need to show that for all $v\in \Points$, $\Points_{\leq 2m-2}(v)\cap T_{2,x,y}\neq \emptyset$. If $v$ is opposite $x$, then $T_{2,x,v}$ and $T_{2,x,y}$ are lines of $\Gamma'$, and since $\Gamma'$ is a projective plane, these two lines have a point $w$ of $\Gamma'$ in common. Since $w$ is a point of $T_{2,x,v}$, it follows that $w\in \Points_{\leq 2m-2}(v)\cap T_{2,x,y}.$
    
    If $v$ and $x$ are not opposite, $\delta(v,x)\leq 2m-2$. The second point of a shortest path from $x$ to $v$, say $r$, is contained in one of the lines through $x$, say $L$. We have that $r$ is contained in $\Points_{\leq 2m-4}(v)$, and since $L$ is a line of $\Gamma'$, it meets $T_{2,x,y}$, which is a line of $\Gamma'$ too, in a point $s$ of $\Gamma'$. Since $\delta(r,s)=2$, and $r\in \Points_{\leq 2m-4}(v)$ it follows that $s\in \Points_{\leq 2m-2}(v)\cap T_{2,x,y}.$ 
    We conclude that $T_{2,x,y}$ is an $\mathcal{X}-$ blocking set.
\end{proof}

\begin{remark} \label{remark:W3q} For certain families of weak generalised polygons, the existence of projective points is very easy to deduce. Consider for example the generalised quadrangle $W(3,q)$ which is embedded in $\PG(3,q)$, let $x$ be a point of $W(3,q)$ and let $\perp$ denote the associated polarity. The perp-geometry of $x$ is simply the point-line geometry of the projective plane $\pi=x^\perp$: the sets $T_{2,x,y}$ are given by the lines obtained as intersection of the planes $x^\perp$ and $y^\perp$ in $\PG(3,q)$. Hence, every point of $W(3,q)$ is a projective point.
It is also not too hard to show that the points of \(\SCHex(q)\) are projective, but many interesting questions related to the existence of projective points and characterisations of weak generalised polygons based on the existence of such points are wide open. 
\end{remark}

\section{Codes from weak generalised polygons}
The methods used in the proofs of this Section can be traced back to the proof of the characterisation of the code words of minimum weight in the code of points and lines in a projective plane. In particular, as is done in \cite[Theorem 6.3.1]{AssmusAndKey} we will also investigate those vectors that have a constant intersection with the incidence vectors of lines (see Lemma \ref{cx1}).

\subsection{A weighted incidence vector}
The vector space \(\FP\) admits the natural product operation \[\langle \cdot, \cdot \rangle: \FP\times\FP \to \F :(\f,\g)\to \langle \f, \g \rangle := \sum_{x\in\Points}f(x)g(x),\] which is clearly bilinear, symmetric. Using $\f\cdot \g$ for the entry-wise product, we see that $\langle \cdot,\cdot\rangle$ satisfies the property that \[\forall \mathfrak{f}, \mathfrak{g} \in \FP: \langle \mathfrak{f}, \mathfrak{g}\rangle  = \langle \f\cdot\g, \mathbf{1}\rangle,\] where \(\mathbf{1}\) is the all-one vector.

Furthermore, we see that for subsets $X,Y$ of $\Points$, we have
 \(\text{supp}(\i_X) = X\),
		 \(\f = \f\cdot\i_{\text{supp}(\f)}\)
		and \(\i_X\cdot \i_Y = \i_{X\cap Y}\).

For every point $v$ of $\Gamma$, we will now define a particular vector $\cv$.
\begin{definition}\label{cxDef}
	Let \(v\) be a point of \(\Gamma\). 
	
	We can define the following element of \(\FP\) determined by \(v\):
	\[\cv := \sum_{k=0}^{m-1} \left(\sum_{j=0}^{m-k-1} (-s)^{j}\right)\mathfrak{i}_{\Points_{2k}(v)}. \]
	The support of this vector is contained in \(\Points_{\le 2m-2}(v)\). 
\end{definition}

\begin{remark}
We see that in a specific setting, such as a weak generalised hexagon, the expression from the previous definition becomes: 
\[\cv := (s^2-s+1)\i_{\Points_0(v)} + (-s+1)\i_{\Points_2(v)}+ \i_{\Points_4(v)}. \]
Recall that the value \(s^2-s+1\) or \(-s+1\) could evaluate to \(0\) over \(\F\) (see Remark \ref{possibleZero}), but if this does not happen, the support of $\cv$ is precisely $\Points_{\le 2m-2}(v)$
\end{remark}

\begin{lemma}\label{cx1} For all points $v$ and all lines $L$ in $\Gamma$ we have
    \(\langle\cv, \mathfrak{i}_L\rangle = 1\).
\end{lemma}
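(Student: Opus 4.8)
The plan is to evaluate the inner product directly from the definition. Since $\i_L$ is the indicator function of the $s+1$ points incident with $L$, we have $\langle\cv,\i_L\rangle = \sum_{w\in\Points_1(L)}\cv(w)$, and by Definition \ref{cxDef} the value $\cv(w)$ depends only on the distance $\delta(v,w)$: it equals $\sum_{j=0}^{m-k-1}(-s)^j$ when $\delta(v,w)=2k$ with $0\le k\le m-1$, and it equals $0$ when $\delta(v,w)=2m$. So the whole computation reduces to understanding how the $s+1$ points of $L$ are distributed among the distance shells $\Points_{2k}(v)$.

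The key geometric input is Corollary \ref{uniqueClosest}: there is a unique point $w_0$ on $L$ closest to $v$, say at (necessarily even) distance $2k_0$. I would then argue that every other point $w$ on $L$ lies at distance exactly $2k_0+2$: concatenating a geodesic from $v$ to $w_0$ with the path from $w_0$ to $w$ through $L$ gives $\delta(v,w)\le 2k_0+2$, while uniqueness of the closest point forces $\delta(v,w)>2k_0$, and parity then pins it down to $2k_0+2$. Since the diameter is $2m$ we always have $2k_0+2\le 2m$, so $k_0\le m-1$.

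With this distribution in hand, I would split into two cases. If $2k_0+2\le 2m-2$ (equivalently $k_0\le m-2$), then $w_0$ contributes $\sum_{j=0}^{m-k_0-1}(-s)^j$ and each of the remaining $s$ points contributes $\sum_{j=0}^{m-k_0-2}(-s)^j$, so that
\[\langle\cv,\i_L\rangle = \sum_{j=0}^{m-k_0-1}(-s)^j + s\sum_{j=0}^{m-k_0-2}(-s)^j.\]
Writing $s(-s)^j = -(-s)^{j+1}$ collapses the second sum against the first, leaving only the $j=0$ term, namely $1$. If instead $k_0=m-1$, then the $s$ points other than $w_0$ sit at distance $2m$ and hence fall outside the support of $\cv$, so only $w_0$ contributes, giving $\sum_{j=0}^{0}(-s)^j = 1$. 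Either way the inner product equals $1$.

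The computation itself is a routine telescoping of a geometric progression; the part that needs genuine care is the geometric bookkeeping of the second paragraph, in particular verifying the clean ``one point near, $s$ points far'' distribution and correctly handling the boundary case $k_0=m-1$ where the far points drop out of the support. This boundary case is precisely where the constant $1$ is produced by a single surviving term rather than by the telescoping, and it is the step most prone to an off-by-one error.
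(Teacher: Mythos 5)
Your proof is correct and takes essentially the same approach as the paper's: both locate, via Corollary \ref{uniqueClosest}, the unique closest point of $L$ to $v$ at even distance $d=2k_0\le 2m-2$, split into the cases $d<2m-2$ and $d=2m-2$, and collapse the resulting geometric sums by the same telescoping identity $s(-s)^j=-(-s)^{j+1}$ to obtain $1$. Your explicit verification that the remaining $s$ points of $L$ lie at distance exactly $d+2$ (and, in the boundary case $k_0=m-1$, at distance $2m$ outside the support of $\cv$) is precisely the bookkeeping the paper's proof leaves implicit, so nothing is missing.
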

\begin{proof}
	Let \(L\) be a line of \(\Gamma\). By Corollary \ref{uniqueClosest}, there exists a unique point \(w\) on \(L\) closest to \(v\). 
	Let \(d = \delta(v,w)\). Observe that $d$ is even, that \(d \le 2m-2\), and that 
	\begin{align*}
		\langle\cv, \mathfrak{i}_L\rangle 
		&= \sum_{\{z\in\Points\mid z\I L\}} \cv(z)\\
	\end{align*}

	If \(d = 2m-2\) then we find
	\begin{align*}
		\langle\cv, \mathfrak{i}_L\rangle 
		&= \cv(w) \\
		&= \sum_{j=0}^{0} (-s)^{j} \\
		&= 1.
	\end{align*}
	
	If \(d<2m-2\) then
	\begin{align*}
		\langle\cv, \mathfrak{i}_L\rangle 
		&= \left(\sum_{j=0}^{m-1-\frac{d}{2}} (-s)^{j}\right) + s\left(\sum_{j=0}^{m-1-\frac{d}{2}-1} (-s)^{j}\right) \\
		&= \left(\sum_{j=1}^{m-1-\frac{d}{2}} (-s)^{j}\right) + 1 - \left(\sum_{j=0}^{m-2-\frac{d}{2}} (-s)^{j+1}\right) \\
		&= \left(\sum_{j=1}^{m-1-\frac{d}{2}} (-s)^{j}\right) + 1 - \left(\sum_{j'=1}^{m-1-\frac{d}{2}} (-s)^{j'}\right) \\
		&= 1.
	\end{align*}
\end{proof}

\begin{lemma}\label{cxcy}
	Let \(v\) and \(w\) be two points of \(\Gamma\). 
	Then, \(\cv-\c_w \in \mathfrak{C}_{\mathbb{F}}^D\). 
\end{lemma}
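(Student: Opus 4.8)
The plan is to use the fact that membership in the dual code $\mathfrak{C}_{\mathbb{F}}^D$ only needs to be checked against the generators of $\mathfrak{C}_{\mathbb{F}}$. By definition, $\mathfrak{C}_{\mathbb{F}}$ is spanned by the incidence vectors $\{\i_L \mid L\in \Lines\}$, and the pairing $\langle\cdot,\cdot\rangle$ is bilinear. Hence a vector $\f\in\FP$ lies in $\mathfrak{C}_{\mathbb{F}}^D$ as soon as $\langle \f,\i_L\rangle=0$ for every line $L$, since any $\g\in\mathfrak{C}_{\mathbb{F}}$ is an $\F$-linear combination of the $\i_L$ and orthogonality then extends by linearity.

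With this reduction in hand, the result is essentially immediate from Lemma \ref{cx1}. First I would fix an arbitrary line $L$ and compute $\langle \cv-\c_w,\i_L\rangle$. Using bilinearity, this equals $\langle \cv,\i_L\rangle-\langle \c_w,\i_L\rangle$. Lemma \ref{cx1} asserts that $\langle \cv,\i_L\rangle=1$ for every point $v$ and every line $L$; applying it to both $v$ and $w$ gives $1-1=0$. Since $L$ was arbitrary, $\cv-\c_w$ is orthogonal to all the generators of $\mathfrak{C}_{\mathbb{F}}$, and therefore to all of $\mathfrak{C}_{\mathbb{F}}$, which is exactly the statement $\cv-\c_w\in\mathfrak{C}_{\mathbb{F}}^D$.

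There is no real obstacle here: the entire content has been front-loaded into Lemma \ref{cx1}, whose point is precisely that the weighted incidence vector $\cv$ has constant inner product $1$ with every line, \emph{independently of the base point $v$}. The only thing worth stating carefully is the elementary observation that checking orthogonality against the spanning set $\{\i_L\}$ suffices to conclude membership in the dual; everything else is a two-term cancellation. I would keep the write-up to three or four lines.

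\begin{proof}
	It suffices to show that $\cv-\c_w$ is orthogonal to every generator $\i_L$ of $\mathfrak{C}_{\mathbb{F}}$, since orthogonality then extends to all $\F$-linear combinations by bilinearity of $\langle\cdot,\cdot\rangle$. Let $L$ be an arbitrary line of $\Gamma$. By Lemma \ref{cx1}, $\langle \cv,\i_L\rangle=1$ and $\langle \c_w,\i_L\rangle=1$, so
	\[
		\langle \cv-\c_w,\i_L\rangle=\langle \cv,\i_L\rangle-\langle \c_w,\i_L\rangle=1-1=0.
	\]
	As this holds for every line $L$, we conclude that $\cv-\c_w\in\mathfrak{C}_{\mathbb{F}}^D$.
\end{proof}
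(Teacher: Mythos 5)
Your proof is correct and matches the paper's own argument essentially verbatim: both reduce membership in $\mathfrak{C}_{\mathbb{F}}^D$ to orthogonality against the spanning vectors $\i_L$ and then apply Lemma \ref{cx1} twice with bilinearity to get $1-1=0$. Nothing is missing, and your write-up is appropriately brief.
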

\begin{proof} Let $L$ be a line.	Using Lemma \ref{cx1} we see that
	\begin{align*}
		\langle\cv-\c_w, \mathfrak{i}_L\rangle  =  \langle\cv, \mathfrak{i}_L\rangle - \langle\c_w, \mathfrak{i}_L\rangle = 1 - 1 = 0.     
	\end{align*}
    Since $\mathfrak{C}_\F$ is generated by the vectors $\mathfrak{i}_L$, where $L\in \mathcal{L}$, the statement follows.
\end{proof}

\begin{corollary} \label{AllEq}Let \(\mathfrak{c}\in \Code\) and let  \( v, w \) be points of \(\Gamma\). Then \( \langle \c, \cv \rangle = \langle \c, \c_w \rangle\).
\end{corollary}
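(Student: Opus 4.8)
The plan is to read this identity off directly from Lemma \ref{cxcy} together with the definition of the dual code. First I would recall that Lemma \ref{cxcy} guarantees $\cv - \c_w \in \Code^D$ for any two points $v$ and $w$ of $\Gamma$. By the definition of the dual code (Definition \ref{def}), membership of $\cv - \c_w$ in $\Code^D$ means precisely that this vector is orthogonal, with respect to the form $\langle \cdot, \cdot \rangle$, to every vector of $\Code$.

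Next, since the hypothesis gives $\c \in \Code$, I would specialise this orthogonality to the vector $\c$, obtaining $\langle \cv - \c_w, \c\rangle = 0$. Invoking the symmetry of $\langle \cdot, \cdot\rangle$ (noted when the form was introduced) this is the same as $\langle \c, \cv - \c_w\rangle = 0$, and the bilinearity of the form then lets me split it as
\[\langle \c, \cv\rangle - \langle \c, \c_w\rangle = 0,\]
which rearranges to the desired identity $\langle \c, \cv\rangle = \langle \c, \c_w\rangle$.

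There is no genuine obstacle here: the corollary is essentially a repackaging of the orthogonality established in Lemma \ref{cxcy}. The only things to keep straight are that $\langle \cdot, \cdot\rangle$ is both symmetric and bilinear, so that the roles of $\c$ and $\cv - \c_w$ may be interchanged and the difference may be distributed across the two arguments of the form.
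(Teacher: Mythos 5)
Your proposal is correct and matches the paper's own proof: both deduce from Lemma \ref{cxcy} that \(\cv - \c_w \in \Code^D\), hence \(\langle \c, \cv - \c_w\rangle = 0\) for every \(\c \in \Code\), and conclude by bilinearity (the paper compresses this into a single line). Your spelling out of the symmetry and linearity of \(\langle \cdot,\cdot\rangle\) is harmless extra detail, not a different argument.
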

\begin{proof}
    Lemma \ref{cxcy} shows that \( \langle \c, \cv - \c_w \rangle = 0\) for all \(\mathfrak{c}\in \Code\).
\end{proof}

\begin{lemma}\label{existsEmpty}
Let \(\c\in \FP\) and let \(C\) be the support of \(\c\).
If \(|C|<s+1\) then
\[\exists v \in \Points: \langle \c, \cv\rangle = 0.\]
\end{lemma}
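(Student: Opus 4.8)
The plan is to reduce the claimed existence of a vanishing inner product to a statement about the support of $\cv$, and then to invoke the blocking-set bound of Lemma \ref{smallC}. First I would write out the inner product explicitly: since $\c$ is supported on $C$, we have
\[
\langle \c, \cv\rangle = \sum_{x\in\Points}\c(x)\,\cv(x) = \sum_{x\in C}\c(x)\,\cv(x).
\]
This sum is automatically zero as soon as the two vectors have disjoint supports, that is, as soon as $C\cap\mathrm{supp}(\cv)=\emptyset$: for each $x$, either $x\notin C$ (so $\c(x)=0$) or $x\notin\mathrm{supp}(\cv)$ (so $\cv(x)=0$), and in both cases the term vanishes. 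So it suffices to produce a single point $v$ for which $\cv$ is identically zero on $C$.

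Next I would use the containment $\mathrm{supp}(\cv)\subseteq\Points_{\le 2m-2}(v)$, which is recorded in Definition \ref{cxDef}. Because of this containment, it is enough to find $v\in\Points$ with
\[
C\cap\Points_{\le 2m-2}(v)=\emptyset,
\]
i.e.\ a point $v$ that is opposite every point of $C$. This is precisely the failure of $C$ to be an $\mathcal{X}$-blocking set, where $\mathcal{X}=\{\Points_{\le 2m-2}(v)\mid v\in\Points\}$: a set meets every member of $\mathcal{X}$ exactly when no point is opposite all of its points.

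To finish, I would argue by contraposition using Lemma \ref{smallC}. That lemma states that every $\mathcal{X}$-blocking set has size at least $s+1$. Since we are assuming $|C|<s+1$, the set $C$ cannot be an $\mathcal{X}$-blocking set, so by the negation of the blocking condition there must exist some $v\in\Points$ with $C\cap\Points_{\le 2m-2}(v)=\emptyset$. Combining this with the first two paragraphs gives $\langle\c,\cv\rangle=0$ for that $v$, as required.

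I do not expect a serious obstacle here; the argument is essentially a bookkeeping reduction. The only real content is the conceptual step of recognising that the inner product $\langle\c,\cv\rangle$ vanishes whenever the support of $\c$ avoids the distance-$(\le 2m-2)$ neighbourhood of $v$, which is exactly the translation of the combinatorial blocking-set bound into the coding-theoretic language; all the hard work has already been done in establishing Lemma \ref{smallC} (via Lemma \ref{smallC1} and Lemma \ref{starLemma}). The one point to be careful about is to state the contrapositive of Lemma \ref{smallC} cleanly, since that lemma is phrased as a lower bound on blocking sets rather than as a non-blocking criterion.
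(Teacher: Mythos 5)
Your proof is correct and takes essentially the same route as the paper: the paper likewise applies the contrapositive of Lemma \ref{smallC} to find a point $v$ with $C\cap\Points_{\le 2m-2}(v)=\emptyset$ and then concludes from $\mathrm{supp}(\cv)\subseteq\Points_{\le 2m-2}(v)$ that $\langle\c,\cv\rangle=0$. You merely spell out the disjoint-support step and the contraposition more explicitly than the paper does, which is fine.
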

\begin{proof}
	By Lemma \ref{smallC} we know that there exists a point \(v\) such that 
	\[|\Points_{\le 2m-2}(v)\cap C| = 0.\]
	Since the support of \(\cv\) is contained in \(\Points_{\le 2m-2}(v)\), the statement follows.
\end{proof}

\begin{definition}
	Let \(L\) be a line and \(\f\) an element of \(\FP\).
	We say that \(L\) is \textbf{covered} by \(\f\) if for each point \(v\) on \(L\) we have \(\f(v) \neq 0\). Equivalently, $L$ is covered by $\f$ if and only if \(|\text{supp}(\i_L\cdot\f)| = s+1\).
\end{definition}

\begin{lemma}\label{smallcx}  
	Let \(\c\in \FP\) and let \(C\) be the support of \(\c\). If
	\[\forall L \in \Lines: \exists v \in \Points: L \text{ is covered by } \cv \text{ and } \langle \c, \cv \cdot \i_L \rangle = 0\] then \(|C|> t + 1\).
\end{lemma}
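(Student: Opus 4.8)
The plan is to argue by contradiction: assume $|C|\le t+1$ and produce a single line $L$ that violates the hypothesis. Since the lemma is only of interest for nonzero vectors, I fix a point $p\in C$, so that $\c(p)\neq 0$.

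The first step is to locate a line through $p$ that avoids the rest of $C$. There are exactly $t+1$ lines incident with $p$, since $\Gamma$ has order $(s,t)$. Because $\Gamma$ contains no $2$-gon, any point $q\neq p$ lies on \emph{at most one} line through $p$, so at most $|C\setminus\{p\}|\le t$ of these lines can carry a second point of $C$. Hence at least one line $L$ through $p$ satisfies $\Points_1(L)\cap C=\{p\}$.

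The second step is to evaluate the inner product in the hypothesis for this specific $L$. Applying the hypothesis to $L$ yields a point $v$ such that $L$ is covered by $\cv$ and $\langle \c,\cv\cdot\i_L\rangle=0$. Now $\cv\cdot\i_L$ is supported on $\Points_1(L)$, while $\c$ vanishes off $C$, so the sum collapses to the single surviving term: $\langle \c,\cv\cdot\i_L\rangle=\sum_{x\in\Points_1(L)}\c(x)\cv(x)=\c(p)\cv(p)$. The covering condition forces $\cv(p)\neq 0$, and $\c(p)\neq 0$ by the choice of $p$; as $\F$ is a field this product is nonzero, contradicting $\langle \c,\cv\cdot\i_L\rangle=0$. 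Therefore the assumption $|C|\le t+1$ is untenable and $|C|>t+1$.

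The only genuine content is the counting step that guarantees a line through $p$ meeting $C$ in $p$ alone, so I do not anticipate a serious obstacle. The point worth stating explicitly is the use of the no-$2$-gon (uniqueness) property: it ensures that each further point of $C$ blocks at most one of the $t+1$ lines through $p$, which is exactly what makes it impossible for $t$ additional points to block all of them. Everything after that is a direct evaluation of $\langle \c,\cv\cdot\i_L\rangle$ together with the definition of \emph{covered}.
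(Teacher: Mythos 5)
Your proof is correct and follows essentially the same route as the paper's: pick a point $p$ of $C$, use $|C|\le t+1$ to find a line $L$ through $p$ meeting $C$ only in $p$, and then observe that the hypothesis forces $\langle \c,\cv\cdot\i_L\rangle=\c(p)\cv(p)\neq 0$, a contradiction. Your explicit appeal to the no-$2$-gon property to justify the line count (and your remark that one must take $\c\neq 0$ to choose $p\in C$) merely spells out steps the paper leaves implicit.
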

\begin{proof}
	Let \(w\) be a point of \(C\) and assume that \(|C|\le t+1\).
	It follows that at least one of the \(t+1\) lines through \(w\), say $M$, does not contain a second point of \(C\). Our assumption implies that there exists a point \(v\in\Points\) such that \(|\text{supp}(\cv\cdot \i_M)| = s+1 \text{ and } \langle \c, \cv\cdot\i_M \rangle = 0 \). Since \(\langle \c, \cv\cdot\i_M \rangle = \langle \c, \cv\cdot\i_{\{w\}} \rangle = \c(w)\cdot \cv(w)\) where both \(\c(w)\) and \(\cx(w)\) are elements of \(\F\) different from \(0\), this is a contradiction.
\end{proof}

\begin{lemma}\label{smallc}
	Let \(\mathfrak{c}\in \FP\) and let \(C\) be the support of \(\c\).
	If \(|C|<s+1\) then
	\[\forall L \in \Lines:\exists v \in \Points_{\le2m-3}(L): \c\cdot\cv = \c\cdot\cv\cdot\i_L.\]
\end{lemma}
\begin{proof}
	By Lemma \(\ref{starLemma}\) (ii) with \(d=m-1\) we get:
	\[\forall L \in \Lines: \exists v \in \Points_{\le 2m-3}(L): C\cap \Points_{\le2m-2}(v) = C\cap \Points_{1}(L).\]
	Therefore, \[\i_{C\cap \Points_{\le2m-2}(v)}= \i_{C\cap \Points_{1}(L)}.\]
	Which implies that
	\[\c\cdot\cv = \c\cdot\i_C\cdot\cv\cdot\i_{\Points_{\le2m-2}(v)} = \c\cdot\c_v\cdot\i_{C\cap \Points_{\le2m-2}(v)}= \c\cdot\cv\cdot \i_{C\cap \Points_{1}(L)} = \c\cdot\cv\cdot\i_C\cdot \i_L = \c\cdot\cv\cdot\i_L.\]
\end{proof}

\begin{lemma}\label{alsoLines}
	Let \(\mathfrak{c}\in \FP\) and let \(C\) be the support of \(\c\). Assume that \(|C|=s+1\) and
	let \(L\) be a line containing a point \(v\) of \(C\). Then,  
	\[\exists w \in \Points_{\le 2m-3}(L): \langle \c_w\cdot \i_L ,\c\rangle = \langle \c_w,\c\rangle.\]
\end{lemma}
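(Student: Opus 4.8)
The plan is to reduce the claimed identity to a disjointness statement about supports and then invoke Lemma \ref{starLemma}(ii). First I would rewrite the difference of the two inner products. Directly from the definition of the bilinear form one has
\[\langle \c_w,\c\rangle - \langle \c_w\cdot\i_L,\c\rangle = \sum_{z\in\Points}\c_w(z)\,\c(z)\,\bigl(1-\i_L(z)\bigr) = \langle \c_w,\;\c\cdot(\mathbf{1}-\i_L)\rangle,\]
so it suffices to produce a point $w\in\Points_{\le 2m-3}(L)$ for which the right-hand side vanishes. The vector $\c\cdot(\mathbf{1}-\i_L)$ is just $\c$ restricted to the points off $L$; call its support $C''$. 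Then $C''=C\setminus\Points_1(L)$, and crucially, since the hypothesis guarantees that $L$ carries a point $v$ of $C$, we have $v\notin C''$ and hence $|C''|\le|C|-1=s<s+1$.

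Next I would apply Lemma \ref{starLemma}(ii) to the set $C''$, which is legitimate precisely because $|C''|<s+1$, with $d=m-1$ (note $m-1\in\{1,\dots,m-1\}$ since $m\ge 2$). This yields a point $w\in\Points_{\le 2m-3}(L)$ with
\[C''\cap\Points_{\le 2m-2}(w) = C''\cap\Points_1(L).\]
The right-hand intersection is empty, because by its very construction $C''$ contains no point incident with $L$. Hence $\Points_{\le 2m-2}(w)$ and $C''$ are disjoint.

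Finally, since $\mathrm{supp}(\c_w)\subseteq\Points_{\le 2m-2}(w)$ by Definition \ref{cxDef} and $\mathrm{supp}\bigl(\c\cdot(\mathbf{1}-\i_L)\bigr)=C''$, the two vectors have disjoint supports, so $\langle \c_w,\;\c\cdot(\mathbf{1}-\i_L)\rangle=0$, which combined with the opening display gives $\langle \c_w\cdot\i_L,\c\rangle=\langle \c_w,\c\rangle$, exactly as required. The argument is essentially bookkeeping and I do not anticipate a serious obstacle; the only point requiring genuine care is the cardinality drop from $s+1$ to at most $s$ when passing to the off-$L$ part $C''$, since this is the single place where the assumption that $L$ meets $C$ is used, and it is exactly this strict inequality $|C''|<s+1$ that licenses the application of Lemma \ref{starLemma}(ii) and guarantees that the witness $w$ is found in $\Points_{\le 2m-3}(L)$.
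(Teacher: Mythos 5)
Your proposal is correct and takes essentially the same route as the paper: both apply Lemma \ref{starLemma}(ii) with $d=m-1$ to a subset of $C$ of size at most $s$ obtained by deleting points of $C$ on $L$ (you delete all of $C\cap\Points_1(L)$, the paper deletes only $v$), then use $\mathrm{supp}(\c_w)\subseteq\Points_{\le 2m-2}(w)$ to compare the two inner products. The only cosmetic difference is the endgame: your choice makes $C''\cap\Points_1(L)$ empty so that disjointness of supports finishes directly, whereas the paper re-inserts $v$ (using that $\delta(w,v)\le 2m-2$ since $w\in\Points_{\le 2m-3}(L)$ and $v\,\I\,L$) to obtain the pointwise identity $\c_w\cdot\c=\c_w\cdot\c\cdot\i_L$.
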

\begin{proof}
	The set $C\setminus\{v\}$ has size $s$, so we can invoke Lemma \ref{starLemma} with $d=m-1$ to deduce that there exists a \(w\in\Points_{\le 2m-3}(L)\) such that \[(C\setminus \{v\}) \cap \Points_{\le 2m-2}(w) = (C\setminus \{v\}) \cap \Points_{1}(L).\]
	Because \(v\in C\cap\Points_{\le 2m-2}(w)\cap\Points_{1}(L)\), we get
	\[C \cap \Points_{\le 2m-2}(w) = C \cap \Points_{1}(L).\]
	The statement now follows since
	\[\c_w\cdot\c = \c_w\cdot\c\cdot\i_{\Points_{\le 2m-2}(w)}\cdot\i_C = \c_w\cdot\c\cdot\i_{\Points_{1}(L)}\cdot\i_C = \c_w\cdot \c\cdot \i_L.\]
\end{proof}
It will follow from the following lemma that code words of weight $s+1$ in $\Code$ give rise to $\mathcal{X}$-blocking sets as defined in the Section \ref{section:BlockingSet}.

\begin{lemma}\label{neverEmpty} Let $s\leq t$.
	Let \(\c\in \Code\) and let \(C\) be the support of \(\c\). Assume that \(|C|=s+1\) and that  
	\[\forall v\in \Points: \text{supp}(\c_v) = \Points_{\leq 2m-2}(v)\]
	Then,
	\[\forall v\in \Points: \langle \c_v ,\c\rangle \neq 0.\]
\end{lemma}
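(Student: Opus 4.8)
The plan is to argue by contradiction, reformulating the statement so that it matches the hypothesis of Lemma \ref{smallcx}. First I would invoke Corollary \ref{AllEq}: since $\c\in\Code$, the scalar $\langle\cv,\c\rangle$ does not depend on the chosen point $v$, so it suffices to rule out that this common value is $0$. Assume therefore, for contradiction, that $\langle\cv,\c\rangle=0$ for every point $v$. My goal is then to verify, for \emph{every} line $L$, the two conditions occurring in Lemma \ref{smallcx}, namely the existence of a point $w$ for which $L$ is covered by $\c_w$ and $\langle\c,\c_w\cdot\i_L\rangle=0$; this forces $|C|>t+1\ge s+1$, contradicting $|C|=s+1$.

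For a line $L$ meeting $C$ in a point I would feed $L$ into Lemma \ref{alsoLines}, which applies precisely because $|C|=s+1$, obtaining a point $w\in\Points_{\le 2m-3}(L)$ with $\langle\c_w\cdot\i_L,\c\rangle=\langle\c_w,\c\rangle$. By the contradiction hypothesis the right-hand side is $0$, so the second condition follows from the symmetry of $\langle\cdot,\cdot\rangle$. The first condition, that $L$ is covered by $\c_w$, is where the hypothesis $\mathrm{supp}(\c_w)=\Points_{\le 2m-2}(w)$ enters: because $w\in\Points_{\le 2m-3}(L)$, the unique point of $L$ closest to $w$ (Corollary \ref{uniqueClosest}) lies at distance at most $2m-4$ from $w$, and any other point of $L$ is at distance at most two more, hence at distance at most $2m-2$. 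Thus all $s+1$ points of $L$ lie in $\Points_{\le 2m-2}(w)=\mathrm{supp}(\c_w)$, so $\c_w$ is nonzero on each of them and $L$ is indeed covered.

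For a line $L$ disjoint from $C$ the two conditions are immediate: choosing any point $v$ on $L$, all points of $L$ sit at distance at most $2\le 2m-2$ from $v$, so $L$ is covered by $\cv$, while $\langle\c,\cv\cdot\i_L\rangle=\sum_{u\in\Points_1(L)}\c(u)\cv(u)=0$ since $\c$ vanishes on $L$. Having checked both cases, Lemma \ref{smallcx} yields $|C|>t+1$, and since $s\le t$ this contradicts $|C|=s+1$, so the common value $\langle\cv,\c\rangle$ is nonzero. I expect the only genuinely delicate step to be the coverage claim in the first case: one must convert the membership $w\in\Points_{\le 2m-3}(L)$ into the distance estimate that places every point of $L$ inside $\mathrm{supp}(\c_w)$, whereas everything else is a fairly mechanical assembly of Corollary \ref{AllEq}, Lemma \ref{alsoLines} and Lemma \ref{smallcx}.
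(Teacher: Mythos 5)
Your proof is correct, but it takes a genuinely different route through the paper's lemmas. The paper also begins with Corollary \ref{AllEq} and the contradiction hypothesis $\langle\c_w,\c\rangle=0$ for all $w$, but then works with a \emph{single} well-chosen line rather than all of them: since $s\le t$, a point $v\in C$ lies on $t+1\ge s+1$ lines while $|C\setminus\{v\}|=s$, so some line $L$ through $v$ meets $C$ only in $v$; applying Lemma \ref{alsoLines} to this tangent line yields $w\in\Points_{\le 2m-3}(L)$ with $0=\langle\c_w,\c\rangle=\langle\c_w\cdot\i_L,\c\rangle=\c_w(v)\cdot\c(v)$, and the support hypothesis (via exactly your distance estimate $\delta(w,v)\le 2m-2$) makes both factors nonzero --- a contradiction, with no appeal to Lemma \ref{smallcx} at all. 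You instead verify the full hypothesis of Lemma \ref{smallcx} for \emph{every} line, which forces you to add the (easy) case of lines disjoint from $C$, and then conclude $|C|>t+1\ge s+1$. The two mechanisms are close cousins: the tangent-line-and-evaluate step the paper performs explicitly is precisely what is buried inside the proof of Lemma \ref{smallcx}, and $s\le t$ enters your argument only through the final comparison $t+1\ge s+1$, whereas the paper uses it up front to manufacture the tangent line. Your packaging buys reuse of an already-proved lemma and a uniform logical structure; the paper's is leaner because it evaluates one inner product directly and never needs the disjoint-line case. You also correctly identified the one delicate point, namely where $\mathrm{supp}(\c_w)=\Points_{\le 2m-2}(w)$ is needed: your coverage computation (closest point of $L$ at distance at most $2m-4$ from $w$, every other point at most two more) is exactly right and is the same estimate the paper needs, though the paper only needs nonvanishing at the single point $v$ rather than coverage of all of $L$.
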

\begin{proof}
	Assume that there exists a point \(v\in \Points\) such that \(\langle \cv ,\c\rangle = 0\). By Corollary \ref{AllEq}, we then have \[\forall w\in \Points: \langle \c_w ,\c\rangle = 0.\] Now let \(v\) be a point of \(C\). Since we assume \(s\le t\), and \(|C\setminus\{v\}| = s\), at least one of the lines through \(v\) does not contain a second point \(C\). Let \(L\) be such a line. By Lemma \ref{alsoLines}, we find that 
	\[\exists w \in \Points_{\le 2m-3}(L): \langle \c_w\cdot \i_L ,\c\rangle = \langle \c_w,\c\rangle = 0.\] 
    
	This means that 
	\[\c_w(v)\cdot\c(v) = 0.\]
	Because \(\F\) is a field and both \(\c_w(v)\) and \(\c(v)\) are assumed to be non-zero, this forms a contradiction.
\end{proof}

\subsection{Proof of the main result}
\begin{theorem}\label{main}
	Let \(\Gamma\) be a finite thick generalised \(2m\)-gon of order \((s,t)\) with \(s\le t\) and \(m>1\). 
	Let \(\F\) be a field such that none of the $m-1$ elements in 
    \[\left\{\sum_{j=0}^{k} (-s)^{j} \mid  1\le k \leq m-1, k\in \Z\right\}\]
    are \(0\) over \(\F\). 
	Then the minimum weight of \(\Code(\Gamma)\) is \(s+1\) and the support of any minimum weight code word is a distance $d$-trace $T_{d,x,y}$ for some $d\in \{1,\ldots,m-1\}$.

    Furthermore, if $s<t$, then $d$ is odd. In particular, if $\Gamma$ is a thick generalised quadrangle of order $(s,t)$ with $s<t$, then every minimum weight code word of $\Code$ is a scalar multiple of the incidence vector of a line.
\end{theorem}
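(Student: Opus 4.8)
The plan is to assemble the machinery developed in Section 3 to first establish the minimum weight, and then to bootstrap the characterisation from Theorem \ref{distanceTraces}. The field condition is precisely what guarantees that for every point $v$, the coefficients $\sum_{j=0}^{k}(-s)^j$ ($1\le k\le m-1$) appearing in $\cv$ are nonzero, so that $\mathrm{supp}(\cv)=\Points_{\le 2m-2}(v)$ exactly. This is the hypothesis needed to apply Lemma \ref{neverEmpty}. I would open by recalling that $\mathfrak{i}_L\in\Code$ has weight $s+1$, giving the upper bound $s+1$ on the minimum weight, so everything reduces to showing no nonzero code word has weight strictly less than $s+1$, and then to describing the supports of the weight-$(s+1)$ words.

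For the lower bound, suppose $\c\in\Code$ is nonzero with support $C$ of size $|C|<s+1$. The key is to derive a contradiction by producing a line that is simultaneously ``covered'' and orthogonal in the sense of Lemma \ref{smallcx}, which forces $|C|>t+1\ge s+1$. Concretely, Lemma \ref{existsEmpty} supplies a point $v$ with $\langle\c,\cv\rangle=0$, and then Corollary \ref{AllEq} propagates this to $\langle\c,\c_w\rangle=0$ for \emph{all} points $w$. Lemma \ref{smallc} (with $d=m-1$) lets me replace, for each line $L$, the global vector $\c\cdot\cv$ by its restriction $\c\cdot\cv\cdot\i_L$ using a suitable near point $v\in\Points_{\le 2m-3}(L)$; combined with the vanishing inner products this says every line is covered with vanishing restricted inner product, so Lemma \ref{smallcx} applies and yields $|C|>t+1$, contradicting $|C|<s+1\le t+1$. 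Hence the minimum weight is exactly $s+1$.

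For the characterisation, take $\c$ of weight exactly $s+1$ with support $C$. The field condition ensures the hypothesis $\mathrm{supp}(\cv)=\Points_{\le 2m-2}(v)$ of Lemma \ref{neverEmpty} holds, so $\langle\cv,\c\rangle\neq0$ for every point $v$. In particular no point $v$ can have $\Points_{\le 2m-2}(v)\cap C=\emptyset$ (since the support of $\cv$ lies in $\Points_{\le 2m-2}(v)$), which is exactly the statement that $C$ is an $\mathcal{X}$-blocking set. Now Theorem \ref{distanceTraces} applies directly: since $|C|=s+1$ meets the bound, $C$ is a distance $d$-trace $T_{d,x,y}$, and when $s<t$ the same theorem forces $d$ to be odd. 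The range $d\in\{1,\ldots,m-1\}$ rather than $\{1,\ldots,m\}$ should be argued by noting that a $d=m$ trace cannot arise as the support of a code word of weight $s+1$ in the thick case; I would verify this either by a direct counting/parity argument or by observing that the orthogonality to the $\cv$ rules out the $d=m$ configuration. For the quadrangle corollary, $m=2$ forces the only odd value $d=1$, and by the remark preceding Theorem \ref{distanceTraces} a distance $1$-trace $T_{1,x,y}$ is exactly the point set of the line $x$; together with the fact that a weight-$(s+1)$ word supported on a line must be a scalar multiple of $\i_L$ (its restriction to $L$ is determined up to scalar by being orthogonal to the relevant $\cv$, or simply because any $\F$-valued function on $s+1$ points whose support is that line and which lies in $\Code$ agrees with $\lambda\i_L$), this gives the scalar-multiple conclusion.

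The main obstacle I anticipate is the bookkeeping in the lower-bound argument: correctly chaining Lemma \ref{existsEmpty}, Corollary \ref{AllEq}, Lemma \ref{smallc} and Lemma \ref{smallcx} so that the covering condition and the vanishing inner product hold \emph{for the same point $v$ on each line $L$}. The delicate point is that Lemma \ref{smallc} gives $\c\cdot\cv=\c\cdot\cv\cdot\i_L$ for a near point $v$, from which one must extract both that $L$ is covered by $\cv$ and that $\langle\c,\cv\cdot\i_L\rangle=\langle\c,\cv\rangle=0$; ensuring these hypotheses align cleanly with the statement of Lemma \ref{smallcx} is where care is needed. A secondary subtlety is justifying $d\le m-1$ rather than $d\le m$, which does not follow formally from Theorem \ref{distanceTraces} alone and requires a small extra argument specific to code words as opposed to arbitrary $\mathcal{X}$-blocking sets.
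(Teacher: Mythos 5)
Your proposal follows essentially the same route as the paper: the lower bound is obtained by exactly the chain Lemma \ref{existsEmpty} $\to$ Corollary \ref{AllEq} $\to$ Lemma \ref{smallc} $\to$ Lemma \ref{smallcx}, with the field hypothesis guaranteeing $\mathrm{supp}(\cv)=\Points_{\le 2m-2}(v)$, and the characterisation is obtained, as in the paper, from Lemma \ref{neverEmpty} followed by Theorem \ref{distanceTraces}. The alignment issue you flag resolves itself just as you anticipate: the point $v\in\Points_{\le 2m-3}(L)$ produced by Lemma \ref{smallc} automatically covers $L$ (every point of a line at distance at most $2m-3$ from $v$ lies in $\Points_{\le 2m-2}(v)$, which is the full support of $\cv$ under the field condition), and the vanishing of $\langle \c,\cv\cdot\i_L\rangle$ follows from $\c\cdot\cv=\c\cdot\cv\cdot\i_L$ together with $\langle\c,\cv\rangle=0$; so Lemma \ref{smallcx} applies with the same $v$ for each $L$.

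Two remarks on the loose ends. First, the scalar-multiple step for quadrangles is the one place where your write-up is not yet a proof: both justifications you sketch are insufficient --- the second is circular (it restates the claim), and orthogonality to a single $\cv$ gives only one linear condition on the $s+1$ values of $\c$ on the line. The paper closes this in one line: if $C$ is the point set of a line $L$ and $v\in C$, then $\c-\c(v)\i_L\in\Code$ has weight at most $s$, hence is the zero vector by the minimum weight just established, so $\c=\c(v)\i_L$. Second, your concern that $d\le m-1$ does not follow formally from Theorem \ref{distanceTraces} is well founded, but you should know that the paper's own proof does not supply the missing argument either: it only derives $d\in\{1,\ldots,m\}$ from Theorem \ref{distanceTraces}, and the range $\{1,\ldots,m-1\}$ appearing in the statement is not justified in the proof. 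For the quadrangle conclusion this is immaterial, since only the oddness of $d$ (coming from $s<t$) is used to force $d=1$; so your plan, completed with the subtraction argument above, proves everything the paper's proof actually proves.
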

\begin{proof}
	Let \(\mathfrak{c}\in\Code\)  be a codeword with support $C$ and assume to the contrary that $|C|<s
    +1$. 
	
	By Lemma \ref{existsEmpty}, 
	\[\exists v \in \Points: \langle \c, \cv \rangle = 0,\]
	while Corollary \ref{AllEq} implies that 
	\[\forall v, w \in \Points: \langle \c, \cv \rangle = \langle \c, \c_w\rangle. \]
	We conclude that
	\[\forall v\in \Points: \langle \c, \cv\rangle = 0.\]
	From Lemma \ref{smallc} we get
	\[\forall L \in \Lines:\exists v \in \Points_{\le2m-3}(L): \c\cdot\cv = \c\cdot\cv\cdot\i_L.\]
	Combining this, we find 
	\[\forall L \in \Lines:\exists v \in \Gamma_{\le2m-3}(L): \langle \c,\cv\rangle = \langle \c\cdot \cv, \mathbf{1} \rangle = \langle \c\cdot \cv\cdot \i_L, \mathbf{1} \rangle = \langle \c,\cv\cdot \i_L\rangle = 0.\]
	The condition on \(\F\) implies that for all $v\in \Points$, $supp(\cv)=\Points_{\leq 2m-2}(v)$. Since all points on a line that lies at distance at most $2m-3$ from a point $v$ lie at distance at most $2m-2$ of $v$, we see that all lines at distance at most \(2m-3\) from \(v\) are covered by \(\cv\) i.e.:
	\[\forall L\in \Lines: \forall v \in \Points_{\le 2m-3}(L): |\text{supp}(\i_L\cdot\cv)| = s+1.\]
	Therefore, the conditions for Lemma \ref{smallcx} are fulfilled, which gives us \(|C|>t+1\). This contradicts the assumptions that $|C|<s+1$ and \(s\le t\).
	
	Since for any line \(L\in \Lines\), the codeword \(\mathfrak{i}_L\) has weight \(s+1\), we have shown that the minimum weight of $\Code$ is $s+1$.

    Now assume that $|C|=s+1$. The condition on $\F$ shows that we can invoke Lemma \ref{neverEmpty} to find that $C$ forms an $\mathcal{X}$-blocking set.  Theorem \ref{distanceTraces} now shows that $C$ is a distance trace $T_{d,x,y}$. Furthermore, if $s<t$, then $d$ is odd. If additionally $\Gamma$ is a weak generalised quadrangle, then $m=2$, so since $d\in \{1,\ldots,m\}$ is odd, only the possibility $T_{1,x,y}$ occurs: the support $C$ is the point set of a line. 
    Now assume that $C$ is the point set of a line $L$. Let $v$ be a point of $L$. Consider the vector $\c-\c(v)\mathfrak{i}_L$. This is a linear combination of code words in $\Code$, so it is a code word of $\Code$, but its weight is at most $s$. Therefore, it is the zero vector. It follows that $\c=\c(v)\mathfrak{i}_L$. 
\end{proof}

\begin{remark}
	We could state Theorem \ref{main} for finite weak generalised \(2m\)-gons, but the requirement \(1-s\neq 0\), together with \(s\le t\) forces the \(2m\)-gon to be thick since this implies that every point and every line is incident with at least three other elements (see Remark \ref{thickAlt}).
\end{remark}

\begin{remark}
	The condition on the field $\F$ in Theorem \ref{main} boils down to the following: for finite thick generalised quadrangles, the condition \(s\neq 1\) in $\F$, is automatically fulfilled by being thick, for finite thick generalised hexagons, we have the additional requirement \(s^2-s+1\neq 0\) in $\F$, and for finite thick generalised octagons we require \(0\notin \{-s^3+s^2-s+1, s^2-s+1\}\) in $\F$. 
    In particular, the classical case, where we study the $p$-ary code of a polygon embedded in $\PG(n,q)$, $q=p^h$, $p$ prime, is included in the theorem since in that case, $s=q=0$ in \(\F=\F_p\).
\end{remark}

\begin{remark} After presenting this work, we became aware of current ongoing research  that is expected to contain a slightly stronger version of Theorem \ref{distanceTraces} for the case of generalised hexagons \cite{SiraHendrik}. In this paper, which is currently still in preparation, the condition $s\leq t$ would not be present.
\end{remark}

\paragraph{Acknowledgement}
The authors would like to thank the anonymous reviewers for their helpful suggestions, in particular regarding Definition \ref{cxDef}.

\bibliographystyle{abbrv}
\bibliography{sources}
\end{document}